\theoremstyle{plain}
  \declaretheorem[numberwithin=section]{theorem}
  \declaretheorem[numberlike=theorem]{lemma}
  \declaretheorem[numberlike=theorem]{conjecture}
\theoremstyle{definition}
  \declaretheorem[numberlike=theorem]{example}
\newenvironment{acknowledgements}{\bigskip\textbf{Acknowledgements.}}{}
\newcommand{\email}[1]{{\textit{Email:} \texttt{#1}}}
\newcommand{\nin}{\not\in}
\begin{document}

\title{Core partitions into distinct parts and an analog of Euler's
theorem}
\author{Armin Straub\thanks{\email{straub@southalabama.edu}}\\
Department of Mathematics and Statistics\\
University of South Alabama}

% \date{\today}
\date{January 26, 2016}

\maketitle

\begin{abstract}
A special case of an elegant result due to Anderson proves that the number of
$( s, s + 1)$-core partitions is finite and is given by the Catalan number
$C_s$. Amdeberhan recently conjectured that the number of $( s, s + 1)$-core
partitions into distinct parts equals the Fibonacci number $F_{s + 1}$. We
prove this conjecture by enumerating, more generally, $( s, d s - 1)$-core
partitions into distinct parts. We do this by relating them to certain tuples
of nested twin-free sets.

As a by-product of our results, we obtain a bijection between partitions into
distinct parts and partitions into odd parts, which preserves the perimeter
(that is, the largest part plus the number of parts minus $1$). This simple
but curious analog of Euler's theorem appears to be missing from the
literature on partitions.
\end{abstract}

\section{Introduction}

A {\emph{partition}} $\lambda$ of $n$ (for very good introductions see
\cite{andrews-part} and \cite{ae-partitions}) is a finite sequence $(
\lambda_1, \lambda_2, \ldots, \lambda_{\ell})$ of positive integers $\lambda_1
\geq \lambda_2 \geq \cdots \geq \lambda_{\ell}$ such that
$\lambda_1 + \lambda_2 + \ldots + \lambda_{\ell} = n$. The integers
$\lambda_1, \lambda_2, \ldots, \lambda_{\ell}$ are referred to as the
{\emph{parts}} of $\lambda$, with $\lambda_1$ being the largest part and
$\ell$ the number of parts. Such a partition $\lambda$ is frequently
represented by its {\emph{Young diagram}}, which we take to be a
left-justified array of square cells with $\ell$ rows such that the $i$th row
consists of $\lambda_i$ cells. To each cell $u$ is assigned a {\emph{hook}},
which is composed of the cell $u$ itself as well as all cells to the right of
$u$ and below $u$. The {\emph{hook length}} of $u$ is the number of cells the
hook consists of. A partition $\lambda$ is said to be {\emph{$t$-core}} if
$\lambda$ has no cell of hook length equal to $t$. An explanation of this
terminology is given, for instance, in \cite{ahj-cores}. More generally,
$\lambda$ is said to be $( t_1, t_2, \ldots, t_r)$-core if $\lambda$ is
$t$-core for $t = t_1, t_2, \ldots, t_r$.

The motivation to count partitions that are $t$-core for different values of
$t$ has been sparked by the following elegant result due to Anderson
\cite{anderson-cores}.

\begin{theorem}
  \label{thm:anderson}The number of $( s, t)$-core partitions is finite if and
  only if $s$ and $t$ are coprime. In that case, this number is
  \begin{equation*}
    \frac{1}{s + t}  \binom{s + t}{s} .
  \end{equation*}
\end{theorem}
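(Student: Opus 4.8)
The plan is to pass from partitions to their \emph{beta-sets} (equivalently, the bead positions of a Maya diagram, or abacus) and to recast the $t$-core condition there as a closure property. To a partition $\lambda = (\lambda_1, \ldots, \lambda_{\ell})$ I associate the strictly decreasing set $S(\lambda) = \{\lambda_i - i : i \geq 1\} \subseteq \mathbb{Z}$ (with $\lambda_i = 0$ for $i > \ell$), which contains every sufficiently negative integer and excludes every sufficiently positive one; the map $\lambda \mapsto S(\lambda)$ is a bijection onto the balanced subsets of this shape (those with as many nonnegative elements as there are negative non-elements). The first standard lemma I would establish is that removing a $t$-hook from $\lambda$ amounts to sliding a bead from some $x \in S$ into the empty slot $x - t$; hence $\lambda$ is $t$-core if and only if $S$ is closed under subtracting $t$, i.e.\ $x \in S \Rightarrow x - t \in S$. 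Consequently $\lambda$ is $(s,t)$-core precisely when $S$ is closed under subtracting both $s$ and $t$, equivalently when the complement $\mathbb{Z} \setminus S$ is closed under adding any element of the numerical semigroup $\langle s, t \rangle = \{ as + bt : a, b \geq 0 \}$.

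Second, I would dispose of the ``only if'' direction. If $d = \gcd(s,t) > 1$ then $\langle s, t \rangle \subseteq d\mathbb{Z}$, so the closure conditions never mix the residue classes modulo $d$; the set $S$ decouples as $\bigsqcup_r \big( S \cap (r + d\mathbb{Z}) \big)$, and one may freely transfer ``charge'' between two classes while keeping each class closed, producing infinitely many distinct $(s,t)$-cores. Thus finiteness forces $\gcd(s,t) = 1$, which I assume henceforth.

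Third, the structural heart of the enumeration. Translating the closure property into the language of first-column hook lengths, I would show that the set $\beta(\lambda)$ of first-column hook lengths of an $(s,t)$-core consists entirely of \emph{gaps} of $\langle s, t \rangle$: were a positive semigroup element to lie in $\beta$, repeatedly subtracting $s$ or $t$ (staying positive) would force a smaller semigroup element into $\beta$, down to $s$, $t$, or $0$, each of which is impossible. Moreover $\beta$ is exactly an order ideal of the poset $P_{s,t}$ on the finitely many gaps, ordered by the covering relations $g \lessdot g + s$ and $g \lessdot g + t$, and $\lambda \mapsto \beta(\lambda)$ is a bijection between $(s,t)$-cores and order ideals of $P_{s,t}$. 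Since $\gcd(s,t) = 1$ there are only $(s-1)(t-1)/2$ gaps, so this already yields finiteness.

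Finally, I would count the order ideals of $P_{s,t}$. Representing each gap by its position relative to the multiples of $s$ and of $t$ identifies $P_{s,t}$ with the poset of lattice cells lying below the diagonal of the $s \times t$ rectangle, whose order ideals are in bijection with the monotone lattice paths from $(0,0)$ to $(s,t)$ staying weakly below that diagonal; the number of such paths is $\tfrac{1}{s+t} \binom{s+t}{s}$ by the rational ballot problem (cycle lemma). I expect the main obstacle to be this last identification — verifying that the gap poset $P_{s,t}$ really is the below-diagonal staircase poset, so that its order ideals are the rational-Catalan objects. Once that is in place, the final count is a routine application of the cycle lemma.
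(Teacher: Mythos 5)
You should know at the outset that the paper never proves this statement: Theorem~\ref{thm:anderson} is quoted from Anderson \cite{anderson-cores} purely as motivation for the results that follow, so there is no internal proof to compare yours against. Judged on its own, your outline is correct, and it is essentially Anderson's original argument in its now-standard formulation (see also \cite{ahj-cores}): the beta-set/abacus dictionary turns the $t$-core condition into closure of the beta-set under subtraction of $t$; for $\gcd(s,t) = d > 1$ the residue classes modulo $d$ decouple, and shifting ``charge'' between two classes (while keeping the total balanced, which is easily arranged) produces infinitely many $(s,t)$-cores; for coprime $s$ and $t$ the first-column hook lengths of an $(s,t)$-core form an order ideal in the poset of gaps of $\langle s, t \rangle$; and writing each gap uniquely as $g = st - as - bt$ with $a, b \geq 1$ identifies that poset with the cells below the diagonal of an $s \times t$ rectangle, whose order ideals correspond to sub-diagonal lattice paths counted by $\frac{1}{s+t}\binom{s+t}{s}$ via the cycle lemma. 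The step you flag as the main obstacle is thus genuine but routine, exactly as you anticipate. It is also worth noting that your key lemma --- $\lambda$ is $t$-core if and only if $x \in \beta(\lambda)$ and $x \geq t$ imply $x - t \in \beta(\lambda)$ --- is precisely the tool (cited from Xiong \cite{xiong-cores}) that the paper does use in proving Lemma~\ref{lem:twinfree:core}, so your approach is continuous with the techniques the paper employs for its own theorems.
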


In particular, the number of $( s, s + 1)$-core partitions is the Catalan
number
\begin{equation*}
  C_s = \frac{1}{s + 1} \binom{2 s}{s} = \frac{1}{2 s + 1}  \binom{2 s +
   1}{s},
\end{equation*}
which also counts the number of Dyck paths of order $s$. Generalizations to $(
s, s + 1, \ldots, s + p)$-core partitions, including a relation to generalized
Dyck paths, are given in \cite{al-cores}.

In a different direction, Ford, Mai and Sze \cite{fms-cores} show that the
number of self-conjugate $( s, t)$-core partitions is
\begin{equation*}
  \binom{\lfloor s / 2 \rfloor + \lfloor t / 2 \rfloor}{\lfloor s / 2
   \rfloor},
\end{equation*}
provided that $s$ and $t$ are coprime. More generally, Amdeberhan
\cite{amdeberhan-conj} raises the interesting problem of counting the number
of special partitions which are $t$-core for certain values of $t$. In
particular, he conjectures the following count.

\begin{conjecture}
  \label{conj:amdeberhan}The number of $( s, s + 1)$-core partitions into
  distinct parts equals the Fibonacci number $F_{s + 1}$.
\end{conjecture}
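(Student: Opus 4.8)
The plan is to recast everything in terms of beta-sets (first-column hook lengths) and then to watch the twin-free condition forced by having distinct parts collide with the recursive closure forced by the two core conditions. First I would encode a partition $\lambda = (\lambda_1, \ldots, \lambda_\ell)$ by its set of first-column hook lengths $\beta(\lambda) = \{\lambda_i + \ell - i : 1 \le i \le \ell\}$, a finite set of positive integers; assigning $\beta(\lambda)$ to $\lambda$ is a bijection between partitions and finite subsets of $\mathbb{Z}_{>0}$. Two observations drive the argument. First, the consecutive differences of $\beta(\lambda)$ equal $\lambda_i - \lambda_{i+1} + 1$, so $\lambda$ has distinct parts exactly when $\beta(\lambda)$ is \emph{twin-free}, that is, contains no two consecutive integers. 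Second, by the standard abacus (beta-set) description of cores, $\lambda$ is $t$-core if and only if $\beta(\lambda)$ is closed under subtracting $t$, meaning that $b \in \beta(\lambda)$ with $b \geq t$ forces $b - t \in \beta(\lambda)$. Thus the task becomes: count twin-free finite sets $\beta \subseteq \mathbb{Z}_{>0}$ that are closed under subtracting both $s$ and $s+1$.

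The crux of the proof is to show that such a $\beta$ can contain no integer $\geq s$, so that in fact $\beta \subseteq \{1, 2, \ldots, s-1\}$. Suppose some $b \in \beta$ has $b \geq s$. If $b = s$, closure under subtracting $s$ gives $0 \in \beta$, which is impossible since $\beta$ consists of positive integers; likewise $b = s+1$ forces $0 \in \beta$ by subtracting $s+1$. Hence $b \geq s+2$, and then closure puts both $b - s$ and $b - (s+1)$ into $\beta$. But these are the two consecutive positive integers $b-s-1$ and $b-s$, contradicting twin-freeness. This is precisely the point where the two core conditions for the \emph{consecutive} moduli $s$ and $s+1$ interact with distinctness, and getting it cleanly — especially disposing of the boundary values $b \in \{s, s+1\}$ — is the step I expect to require the most care.

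Conversely, every twin-free subset of $\{1, \ldots, s-1\}$ satisfies the two closure conditions vacuously, since all of its elements are smaller than $s$, and it is twin-free; so it corresponds to an $(s, s+1)$-core partition into distinct parts. The problem therefore reduces exactly to counting twin-free subsets of $\{1, \ldots, s-1\}$. Writing $a_n$ for the number of subsets of $\{1, \ldots, n\}$ with no two consecutive elements, I would split on whether $n$ is used: omitting $n$ leaves a twin-free subset of $\{1, \ldots, n-1\}$, while including $n$ forbids $n-1$ and leaves one of $\{1, \ldots, n-2\}$. Hence $a_n = a_{n-1} + a_{n-2}$ with $a_0 = 1$ and $a_1 = 2$, which gives $a_n = F_{n+2}$ in the normalization $F_1 = F_2 = 1$. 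Taking $n = s-1$ yields $F_{s+1}$, proving the conjecture.

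I expect the only genuine obstacle to be fixing the beta-set/core dictionary with conventions under which the boundary cases $b \in \{s, s+1\}$ behave as claimed; once the characterizations of ``distinct parts'' (twin-free) and ``$t$-core'' (closure under subtracting $t$) are pinned down, the collapse to subsets of $\{1, \ldots, s-1\}$ and the Fibonacci count are immediate. It is also worth noting that the same framework should generalize: allowing $\beta$ to reach into higher ``blocks'' of the complement of the numerical semigroup $\langle s, t \rangle$ is what would produce the tuples of nested twin-free sets needed for the broader $(s, ds-1)$ enumeration, of which the present statement is the simplest case.
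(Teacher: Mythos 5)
Your proof is correct, and the delicate points all check out: the beta-set dictionary (distinct parts correspond to twin-free sets, $t$-core to closure under subtracting $t$), the boundary cases $b \in \{s, s+1\}$ (killed because $0 \notin \beta(\lambda)$), the collapse to $\beta(\lambda) \subseteq \{1, \ldots, s-1\}$, and the count $F_{s+1}$ of twin-free subsets. The paper proves this conjecture twice, and your argument is distinct from both, though closest to the second. The first proof (Section~\ref{sec:fib}) never uses beta-sets: Lemma~\ref{lem:distinct:m1} shows that a partition into distinct parts is $(s, s+1)$-core exactly when its perimeter is less than $s$, by a Young-diagram argument (the cell to the right of a cell of hook length $t$ has hook length $t-1$ or $t-2$), and the Fibonacci recursion then comes from decrementing or deleting the largest part of the partition itself. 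The second proof (Section~\ref{sec:fibx}, for general $(s, ds-1)$-cores) uses exactly your framework of beta-sets and twin-free sets, but in Lemma~\ref{lem:twinfree:core} the bound on $\beta(\lambda)$ is obtained indirectly, by noting that an $(s, ds-1)$-core is also $(ds-1, ds)$-core and invoking Lemma~\ref{lem:distinct:m1}, rather than by your direct observation that any $b \geq s+2$ in $\beta(\lambda)$ forces the twins $b-s-1$ and $b-s$ into $\beta(\lambda)$, while $b \in \{s, s+1\}$ forces $0 \in \beta(\lambda)$. Your collapse argument is self-contained and arguably the cleanest route for this special case; what the paper gains from its detours is, respectively, the perimeter statistic that yields Theorem~\ref{thm:distinct-odd} (the Euler-type analog) as a by-product, and the nested twin-free tuples that yield the full two-parameter enumeration $N_d(s)$ --- a generalization you correctly anticipate in your closing remark.
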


It is further conjectured in \cite{amdeberhan-conj} that the largest
possible size of an $( s, s + 1)$-core partition into distinct parts is
$\lfloor s ( s + 1) / 6 \rfloor$, and that there is a unique such largest
partition unless $s \equiv 1$ modulo $3$, in which case there are two
partitions of maximum size. Amdeberhan also provides a conjecture for the
average size of these partitions. We do not pursue these more intricate, but
very interesting, questions here (the interested reader is referred to, for
instance, \cite{os-core}, \cite{ahj-cores}, \cite{sz-core},
\cite{chw-core}, \cite{johnson-core}, and the references therein, for the
case of general core partitions, and \cite{xiong-cores} for the case of $(
s, s + 1)$-core partitions into distinct parts). Instead, we focus on the most
basic question on core partitions into distinct parts, namely to enumerate
them. Ultimately, our main result is the following enumeration of $( s,
t)$-core partitions into distinct parts for a two-parameter family of values
$( s, t)$.

\begin{theorem}
  \label{thm:fibx:distinct}Let $d, s \geq 1$. The number $N_d ( s)$ of $(
  s, d s - 1)$-core partitions into distinct parts is characterized by $N_d (
  1) = 1$, $N_d ( 2) = d$ and, for $s \geq 3$,
  \begin{equation*}
    N_d ( s) = N_d ( s - 1) + d N_d ( s - 2) .
  \end{equation*}
\end{theorem}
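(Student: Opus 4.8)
The plan is to pass to the language of \emph{beta-sets} (first-column hook lengths) and reduce the theorem to a weighted count of independent sets on a path. Recall that a partition $\lambda = (\lambda_1 \ge \cdots \ge \lambda_{\ell})$ is encoded bijectively by the finite set $B(\lambda) = \{\lambda_i + \ell - i : 1 \le i \le \ell\}$ of distinct positive integers (its first-column hook lengths), and that every finite $B \subseteq \mathbb{Z}_{\ge 1}$ arises from exactly one $\lambda$. My first step is to record two standard dictionary entries. Since $(\lambda_i + \ell - i) - (\lambda_{i+1} + \ell - (i+1)) = (\lambda_i - \lambda_{i+1}) + 1$, the partition $\lambda$ has \emph{distinct} parts if and only if $B(\lambda)$ is \emph{twin-free}, i.e.\ contains no two consecutive integers. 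Via the abacus one checks that $\lambda$ is $t$-core if and only if $B(\lambda)$ is closed under subtracting $t$, in the sense that $b \in B(\lambda)$ and $b \ge t$ imply $b - t \in B(\lambda)$ (in particular $t \notin B(\lambda)$, since $0 \notin B(\lambda)$).

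Next I would use the $s$-core condition to arrange $B = B(\lambda)$ into a grid. Closure under subtracting $s$ means that within each residue class $\rho \bmod s$ the elements of $B$ form an initial segment $\rho, \rho + s, \ldots, \rho + (a_\rho - 1)s$, and that the class $\rho \equiv 0$ is empty; thus an $s$-core $B$ is recorded by a height vector $(a_1, \ldots, a_{s-1})$ of nonnegative integers. Because consecutive integers lie in adjacent classes and the smallest elements $\rho, \rho+1$ are both present as soon as $a_\rho, a_{\rho+1} \ge 1$, the twin-free condition is equivalent to requiring that \textbf{no two adjacent columns are simultaneously nonempty}.

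The crux is the extra $(ds-1)$-core condition. Since $ds - 1 \equiv -1 \pmod s$, subtracting $ds - 1$ sends $\rho + js$ to $(\rho+1) + (j-d)s$, shifting the residue up by one and the height index down by $d$. I would show that, \emph{in the presence of $s$-coreness and twin-freeness}, this closure collapses to the single inequality $\max B < ds - 1$. Indeed, if $B$ contained some $b \ge ds - 1$, then either $b = ds - 1$, forcing $0 \in B$, or $b > ds - 1$, forcing $b - (ds-1) \in B$; but $b - (ds-1)$ is a positive integer lying either in the empty class $0$ (when $b$ is in the top class $s-1$) or in the class $\rho+1$ adjacent to the nonempty class $\rho$ containing $b$, which twin-freeness forces to be empty. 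In every case closure fails, so $(ds-1)$-coreness is equivalent to $\max B < ds-1$. Carrying out this residue-and-row bookkeeping cleanly, and pinning down the boundary case of the top class, is where I expect the real work to lie.

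Translating $\max B < ds-1 = (d-1)s + (s-1)$ back to heights yields the caps $a_\rho \le d$ for each interior column $\rho \le s-2$ and $a_{s-1} \le d-1$ for the top column. Hence $(s, ds-1)$-core partitions into distinct parts correspond bijectively to an independent set in the path $1 - 2 - \cdots - (s-1)$ together with a positive height for each chosen column, with $d$ admissible heights for an interior column and $d-1$ for the endpoint $s-1$; this is the description by tuples of nested twin-free sets announced in the abstract. Writing $N_d(s)$ for the number of such objects and conditioning on the first column $v_1$ (an interior column once $s \ge 3$, hence of weight $d$) gives the recurrence at once: if $v_1$ is empty, the remaining columns form the same type of configuration on $2, \ldots, s-1$, contributing $N_d(s-1)$; if $v_1$ is filled, its $d$ height choices force $v_2$ empty and leave the same configuration on $3, \ldots, s-1$, contributing $d\, N_d(s-2)$. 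Together with the base values $N_d(1) = 1$ (only $B = \emptyset$) and $N_d(2) = 1 + (d-1) = d$, this establishes $N_d(s) = N_d(s-1) + d\, N_d(s-2)$ for $s \ge 3$, as claimed.
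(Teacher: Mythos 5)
Your proof is correct, and while it rests on the same foundation as the paper's --- passing to the set $B(\lambda)$ of first-column hook lengths and characterizing the partitions in question as $s$-core, twin-free beta-sets bounded above by $ds-2$ --- it executes both halves of the argument differently, and in each case more directly. For the bound $\max B \leq ds-2$, the paper does not argue by residues as you do; it observes that $s$-core implies $ds$-core, so that an $(s,ds-1)$-core partition into distinct parts is $(ds-1,ds)$-core, and then invokes Lemma~\ref{lem:distinct:m1} to cap the perimeter. Your residue-class argument (any $b \geq ds-1$ in $B$ would force either an element of the empty residue class $0$ or a twin in a class adjacent to a nonempty one) is a correct, self-contained replacement. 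For the counting, the difference is more substantial: the paper encodes the beta-set as a tuple of nested twin-free sets $X_d \subseteq \cdots \subseteq X_1 \subseteq \{1,\ldots,s-1\}$ with $s-1 \nin X_d$ (your height vector $(a_\rho)$ is exactly the same data, via $\rho \in X_j$ if and only if $a_\rho \geq j$), and because it recurses on the largest element $s-1$ --- the one carrying the special restriction --- it needs the auxiliary count $M_d(s)$ of unrestricted nested tuples, a separate lemma (Lemma~\ref{lem:twinfree}) for its recurrence, the bridge identity $N_d(s) = M_d(s-1) + (d-1)M_d(s-2)$, and a final algebraic manipulation to recover the claimed recurrence. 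You instead recurse on the smallest column, which for $s \geq 3$ is a generic column of weight $d$, so the structure reproduces itself (a path whose far endpoint is the unique special vertex) and you obtain the recurrence for $N_d$ in one step, with no auxiliary sequence and no algebra. That is a genuine simplification; its only cost is that the ``weighted independent sets on a path'' dictionary, including the asymmetric caps $a_\rho \leq d$ for $\rho \leq s-2$ versus $a_{s-1} \leq d-1$, must be set up carefully --- which you do.
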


In particular, the case $d = 1$ clearly settles
Conjecture~\ref{conj:amdeberhan}. This special case has since been also
independently proved by Xiong \cite{xiong-cores}. Before giving a proof of
Theorem~\ref{thm:fibx:distinct} in Section~\ref{sec:fibx}, we discuss an
elementary bijective proof of the special case $d = 1$ in
Sections~\ref{sec:fib} and \ref{sec:euler}.

We do so, because a natural extension of our approach leads to a simple but
curious analog of Euler's theorem on partitions into distinct (respectively
odd) parts, which appears to be missing from the literature on partitions.
Namely, we obtain a bijection between partitions into distinct parts on the
one hand and partitions into odd parts on the other hand, which preserves the
perimeter of the partitions. Here, following Corteel and Lovejoy
\cite[Section~4.2]{cl-overpartitions} (up to a shift by $1$), we refer to
the {\emph{perimeter}} of a partition as the maximum part plus the number of
parts minus $1$ (equivalently, the perimeter of $\lambda$ is the maximum hook
length in $\lambda$).

\begin{theorem}
  \label{thm:distinct-odd}The number of partitions into distinct parts with
  perimeter $M$ is equal to the number of partitions into odd parts with
  perimeter $M$. Both are enumerated by the Fibonacci number $F_M$.
\end{theorem}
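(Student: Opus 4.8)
The plan is to encode each partition by the set of hook lengths in its first column and then read off the two classes combinatorially. For a partition $\lambda=(\lambda_1,\ldots,\lambda_\ell)$ with perimeter $M$, let $h_i=\lambda_i+\ell-i$ be the hook length of the cell in row $i$ and column $1$. These are strictly decreasing positive integers with $h_1=\lambda_1+\ell-1=M$, and $\lambda\mapsto\{h_1,\ldots,h_\ell\}$ is a bijection between partitions of perimeter $M$ and subsets of $\{1,\ldots,M\}$ containing $M$. Equivalently, appending $h_{\ell+1}=0$ and recording the consecutive differences $d_i=h_i-h_{i+1}$ identifies such partitions with compositions $(d_1,\ldots,d_\ell)$ of $M$. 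Since $M$ is simultaneously the largest part plus the number of parts minus $1$ and the maximal hook length, this encoding is perimeter-preserving by construction, so it suffices to count (and then match up) the two resulting families of compositions.

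First I would translate the two shape constraints into conditions on the hook set. From $h_i-h_{i+1}=(\lambda_i-\lambda_{i+1})+1$ one sees that $\lambda$ has \emph{distinct} parts exactly when every internal difference is at least $2$, i.e.\ when $\{h_1,\ldots,h_\ell\}$ contains no two consecutive integers (equivalently $d_1,\ldots,d_{\ell-1}\ge 2$ while $d_\ell\ge1$). For \emph{odd} parts one computes $\lambda_i\equiv h_i+\ell-i \pmod 2$, so all parts are odd precisely when consecutive hook lengths alternate in parity and the smallest one is odd; taking the appended $0$ into account, this says that every difference $d_i$ is odd, i.e.\ the associated composition of $M$ has all parts odd. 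Getting this parity bookkeeping exactly right---in particular the role of the minimal part $h_\ell=\lambda_\ell$ and of the appended $0$---is the step I expect to require the most care.

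It then remains to check that both composition classes are counted by $F_M$. The distinct-part side is immediate: removing the forced element $M$ (and the thereby excluded $M-1$) identifies the admissible hook sets with arbitrary subsets of $\{1,\ldots,M-2\}$ that contain no two consecutive elements, of which there are exactly $F_M$. The odd-part side is the classical fact that the number of compositions of $M$ into odd parts is $F_M$, which I would reprove in one line from the generating function $1/(1-x/(1-x^2))$ for odd-part compositions, or from the obvious recurrence obtained by deleting the first part. This already yields the numerical equality asserted in the theorem.

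Finally, to produce the advertised \emph{explicit} perimeter-preserving bijection rather than a mere equality of counts, I would route both classes through a common model: square-and-domino tilings of a strip of length $M-1$, which are also enumerated by $F_M$. No-two-consecutive hook sets correspond to such tilings by marking the admissible gaps, while compositions into odd parts correspond to them by expanding each odd part into a block of dominoes capped by a square (and deleting a trailing square to pass from length $M$ to length $M-1$); composing these two correspondences sends a distinct-part partition to an odd-part partition of the same perimeter. The main obstacle here is not any single hard estimate but the careful verification that these encodings are mutually inverse and that all boundary cases---the last part of each composition, the minimal hook length, and the parity indexing of the Fibonacci numbers---line up, so that the resulting map is genuinely a bijection.
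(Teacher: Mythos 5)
Your argument is correct, but it takes a genuinely different route from the paper's. The paper obtains the distinct-parts count indirectly from Theorem~\ref{thm:fib:distinct}: there are $F_{M+2}$ distinct-part partitions of perimeter at most $M$, hence $F_{M+2}-F_{M+1}=F_M$ of perimeter exactly $M$; the odd-parts count is then proved by a separate induction on the largest part (subtract $2$ from $\lambda_1$ when $\lambda_1>\lambda_2+1$, delete $\lambda_1$ when $\lambda_1=\lambda_2$), giving $F_{M-2}+F_{M-1}=F_M$. You instead encode every partition by its first-column hook lengths $h_i=\lambda_i+\ell-i$, identifying partitions of perimeter $M$ with subsets of $\{1,\ldots,M\}$ that contain $M$; under this encoding distinct parts become twin-free sets, counted by $F_M$ after deleting the forced $M$ and the excluded $M-1$, while odd parts become compositions of $M$ into odd parts, classically counted by $F_M$. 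Your parity bookkeeping is right: $h_i-h_{i+1}=\lambda_i-\lambda_{i+1}+1$ gives the twin-free condition for distinct parts, and with $h_{\ell+1}=0$ appended, all parts being odd is equivalent to all consecutive differences being odd. Your route is in fact much closer in spirit to the paper's Section~\ref{sec:fibx}, where Lemma~\ref{lem:twinfree:core} uses exactly this $\beta$-set encoding to identify $(s,ds-1)$-core distinct-part partitions with nested twin-free sets; your proof is essentially the $d=1$ case of that lemma combined with the odd-composition count, which the paper itself never needs. What your route buys is a uniform, induction-free argument reducing both families to standard Fibonacci-counted objects, and it hands you a perimeter-preserving bijection essentially for free (your tiling model plays the role of the paper's Section~\ref{sec:euler} bijection through compositions into parts $1$ and $2$); what the paper's route buys is self-containedness, since both counts fall out of the same elementary largest-part recursion without appealing to outside facts about independent sets in a path or compositions into odd parts.
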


\begin{example}
  The partitions into distinct parts with perimeter $5$ are $( 5)$, $( 4, 1)$,
  $( 4, 2)$, $( 4, 3)$ and $( 3, 2, 1)$. The partitions into odd parts with
  perimeter $5$ are $( 5)$, $( 3, 3, 3)$, $( 3, 3, 1)$, $( 3, 1, 1)$ and $( 1,
  1, 1, 1, 1)$. In each case, there are $F_5 = 5$ many of these partitions.
\end{example}

While it appears natural, we have been unable to find the result in
Theorem~\ref{thm:distinct-odd} in the existing literature. On the other hand,
an intriguingly similar result of Euler is widely known: the number $D ( n)$
of partitions of $n$ into distinct parts equals the number $O ( n)$ of
partitions of $n$ into odd parts. In other words, there is a bijection between
partitions into distinct and odd parts, which preserves the size of the
partitions. While there are bijective proofs (see, for instance,
\cite[Chapter~2.3]{ae-partitions}), Euler famously proved his claim using a
very elegant manipulation of generating functions (see, for instance,
\cite[Cor.~1.2]{andrews-part} or \cite[Chapter~5.2]{ae-partitions}).
Namely, he observed that
\begin{eqnarray*}
  \sum_{n \geq 0} D ( n) x^n & = & ( 1 + x) ( 1 + x^2) ( 1 + x^3)
  \cdots\\
  & = & \frac{1 - x^2}{1 - x}  \frac{1 - x^4}{1 - x^2}  \frac{1 - x^6}{1 -
  x^3} \cdots\\
  & = & \frac{1}{1 - x}  \frac{1}{1 - x^3}  \frac{1}{1 - x^5} \cdots =
  \sum_{n \geq 0} O ( n) x^n .
\end{eqnarray*}
Several refinements of Euler's theorem due to Sylvester, Fine and
Bousquet-M\'elou--Eriksson are beautifully presented, for instance, in the
book \cite[Chapter~9]{ae-partitions} by Andrews and Eriksson.

\begin{example}
  Bousquet-M\'elou and Eriksson \cite{bme-hall}, \cite{bme-hall2} show
  that the number of lecture hall partitions of $n$ with length $k$ (these are
  special partitions of $n$ into distinct parts) is equal to the number of
  partitions of $n$ into odd parts with each part at most $2 k - 1$. Among
  other refinements, they also prove that the number of partitions of $n$ into
  distinct parts with sign-alternating sum $k$ is equal to the number of
  partitions of $n$ into $k$ odd parts. A corresponding combinatorial
  bijection is given in \cite{ky-do}.
\end{example}

\begin{example}
  Another refinement, found in \cite[(23.91)]{fine}, shows that the number
  of partitions of $n$ into distinct parts with maximum part $M$ is equal to
  the number of partitions of $n$ into odd parts such that the maximum part
  plus twice the number of parts is $2 M + 1$.
\end{example}

\begin{example}
  The {\emph{rank}} of a partition is the difference between the largest part
  and the number of parts. Note that the rank of a partition into distinct
  parts is always nonnegative. Then, we have \cite[(24.6)]{fine} that the
  number of partitions of $n$ into odd parts with maximum part equal to $2 M +
  1$ is equal to the number of partitions of $n$ into distinct parts with rank
  $2 M$ or $2 M + 1$.
\end{example}

A natural question is whether similarly interesting refinements exist for
Theorem~\ref{thm:distinct-odd}, that is, for partitions into distinct
(respectively odd) parts with perimeter $M$.

\section{Partitions with bounded hook lengths}\label{sec:fib}

We begin by proving the case $d = 1$ of Theorem~\ref{thm:fibx:distinct}, thus
establishing Conjecture~\ref{conj:amdeberhan}. The proof for the general case
is then given in Section~\ref{sec:fibx}. Let $F_s$ denote the Fibonacci
numbers with $F_0 = 0$ and $F_1 = 1$.

\begin{theorem}
  \label{thm:fib:distinct}There are $F_{s + 1}$ many $( s, s + 1)$-core
  partitions into distinct parts.
\end{theorem}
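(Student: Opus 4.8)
The plan is to characterize $(s,s+1)$-core partitions into distinct parts via their hook lengths and then set up a recursion matching the Fibonacci numbers. The crucial structural fact is this: by Theorem~\ref{thm:anderson}, an $(s,s+1)$-core partition is one of finitely many objects, and being $s$-core and $(s+1)$-core simultaneously is equivalent to a condition on the first-column hook lengths (the beta-set of the partition). First I would recall the standard encoding: a partition $\lambda$ corresponds to its set of first-column hook lengths, and $\lambda$ is $t$-core precisely when this set of beta-numbers is closed under subtracting $t$ (whenever $h$ is a hook length and $h-t>0$, then $h-t$ is also one). Imposing both $t=s$ and $t=s+1$ forces every first-column hook length to lie in a bounded range; concretely, I expect that the largest hook length (the perimeter) of an $(s,s+1)$-core partition is at most $s$, since any hook of length $\ge s+1$ would generate, via the closure conditions, a forbidden hook of length exactly $s$ or $s+1$.

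\medskip

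Next I would translate the ``distinct parts'' hypothesis into the hook-length picture. A partition has distinct parts if and only if its Young diagram has no two rows of equal length, which in terms of first-column hook lengths $h_1>h_2>\cdots>h_\ell$ means that consecutive differences satisfy $h_i - h_{i+1}\ge 2$; equivalently, the set of first-column hook lengths is \emph{twin-free} in the sense that it contains no two consecutive integers. Combining this with the bounded-perimeter conclusion, $(s,s+1)$-core partitions into distinct parts correspond bijectively to twin-free subsets of $\{1,2,\ldots,s\}$ (with the empty set corresponding to the empty partition), subject to whatever boundary normalization the beta-set encoding requires. The enumeration of twin-free subsets of $\{1,\ldots,s\}$ — subsets with no two consecutive elements — is the classical object counted by $F_{s+1}$, which I would verify by the standard recursion: either $s$ is excluded (leaving twin-free subsets of $\{1,\ldots,s-1\}$) or $s$ is included (forcing $s-1$ out, leaving twin-free subsets of $\{1,\ldots,s-2\}$), giving $a_s=a_{s-1}+a_{s-2}$ with $a_1=2$, $a_2=3$, hence $a_s=F_{s+2}$ — so I would need to be careful about the exact range and the off-by-one in the Fibonacci index to land on $F_{s+1}$ rather than $F_{s+2}$.

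\medskip

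I expect the main obstacle to be pinning down the exact correspondence between $(s,s+1)$-core partitions into distinct parts and twin-free sets precisely enough to get the index right, rather than merely up to a constant shift. The perimeter bound and the twin-free condition are individually clean, but verifying that \emph{every} twin-free subset in the correct range genuinely arises from an $(s,s+1)$-core partition into distinct parts — and that the map is injective — requires checking that the beta-set closure conditions for both $s$ and $s+1$ are automatically satisfied once the perimeter is bounded by $s$ and the parts are distinct. This reconciliation of the two closure conditions with the twin-free and perimeter constraints, ensuring no spurious hook length of $s$ or $s+1$ is created, is where the real content lies; the Fibonacci count itself then follows immediately from the elementary recursion. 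I would therefore organize the proof around establishing a clean bijection first and deferring the counting to a one-line invocation of the twin-free recursion.
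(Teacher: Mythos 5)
Your overall route---encoding a partition by its set of first-column hook lengths (the $\beta$-set), translating ``distinct parts'' into twin-freeness, and counting twin-free subsets---is viable, and it is essentially the approach the paper itself takes in Section~\ref{sec:fibx} (Lemma~\ref{lem:twinfree:core} specialized to $d=1$, following Xiong) rather than the paper's direct inductive proof of this theorem. However, as written your proposal has a genuine gap at its central structural step. The claim that \emph{every} $(s,s+1)$-core partition has perimeter at most $s$ is false: the partition $(3,1,1)$ is $(3,4)$-core (its $\beta$-set is $\{5,2,1\}$, which is closed under subtracting $3$ and under subtracting $4$), yet its perimeter is $5$. Your justification---that a hook of length $\geq s+1$ ``would generate a forbidden hook of length exactly $s$ or $s+1$''---misreads the closure condition: closure forces $x-s$ and $x-(s+1)$ to be \emph{present} in the $\beta$-set, which by itself is no contradiction. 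The perimeter bound holds only for partitions into distinct parts, and the correct argument must use that hypothesis: if $x \in \beta(\lambda)$ with $x \geq s+1$, then $x-s-1$ and $x-s$ must both lie in $\beta(\lambda)$ (when $x=s+1$ the first of these is $0$, which is already impossible), and these are two consecutive integers, contradicting the twin-freeness that encodes distinctness. This is exactly the point of the paper's Lemma~\ref{lem:distinct:m1}, whose Young-diagram proof likewise hinges on distinct parts: the cell to the right of a hook of length $t$ has hook length $t-1$ or $t-2$.

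The second gap is the off-by-one that you flag but do not resolve, and it is not cosmetic: with your stated range $\{1,\ldots,s\}$ the count is $F_{s+2}$, which contradicts the theorem. The resolution is that no element of $\beta(\lambda)$ can equal $s$ either (such an element is itself a hook of length $s$; equivalently, closure under subtracting $s$ would force $0 \in \beta(\lambda)$), so for a partition into distinct parts the two core conditions pin $\beta(\lambda)$ down to a twin-free subset of $\{1,\ldots,s-1\}$. Conversely, every such subset arises, because when all $\beta$-numbers are less than $s$ the closure conditions for $s$ and $s+1$ are vacuous, and a twin-free $\beta$-set is precisely what gives distinct parts. Twin-free subsets of $\{1,\ldots,s-1\}$ number $F_{(s-1)+2}=F_{s+1}$, as required. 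With these two repairs your argument becomes a complete and correct proof.
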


In preparation for Theorem~\ref{thm:fib:distinct}, we first prove the
following claim.

\begin{lemma}
  \label{lem:distinct:m1}A partition into distinct parts is $( s, s + 1)$-core
  if and only if it has perimeter strictly less than $s$.
\end{lemma}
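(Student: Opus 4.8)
The plan is to translate everything into the language of first-column hook lengths (beta-numbers). Writing $\lambda = (\lambda_1 > \cdots > \lambda_\ell)$ for a partition into distinct parts, I would set $\beta_i = \lambda_i + \ell - i$; these are exactly the hook lengths of the cells $(i,1)$ in the first column, and they form a strictly decreasing sequence of positive integers. Two observations are immediate and will drive the proof. First, the largest hook length of $\lambda$ is the hook length $\beta_1 = \lambda_1 + \ell - 1$ of the top-left cell, which is precisely the perimeter $M$. Second, distinctness of the parts is equivalent to the gap condition $\beta_i - \beta_{i+1} = (\lambda_i - \lambda_{i+1}) + 1 \geq 2$, so that no two beta-numbers are consecutive integers.

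The ``if'' direction is then immediate and does not even use distinctness: if the perimeter is $< s$, then every hook length is at most $\beta_1 = M < s < s+1$, so $\lambda$ has no cell of hook length $s$ or $s+1$ and is $(s,s+1)$-core.

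For the ``only if'' direction I would argue the contrapositive: assuming $M = \beta_1 \geq s$, I exhibit a cell of hook length $s$ or $s+1$. The key tool is the standard description of the hook lengths occurring in the first row, namely $\{1, \ldots, \beta_1\} \setminus \{\beta_1 - \beta_j : 2 \le j \le \ell\}$; equivalently, for $1 \le h \le \beta_1$, the value $h$ is a first-row hook length if and only if $\beta_1 - h$ is not a beta-number. (This is the usual Maya-diagram/abacus correspondence between beta-sets and hook lengths, which I would recall with a one-line justification.) Now consider the two consecutive integers $\beta_1 - s$ and $\beta_1 - (s+1)$. Since the beta-numbers have pairwise gaps $\geq 2$ and $0$ is never a beta-number, at most one of these two integers is a beta-number; hence at least one of $\beta_1 - s$, $\beta_1 - (s+1)$ fails to be a beta-number. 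If $\beta_1 - s$ is not a beta-number, then (as $\beta_1 \geq s$) the value $h = s$ is a first-row hook length; otherwise $\beta_1 - s$ is a beta-number, forcing $\beta_1 - (s+1)$ to be a non-beta-number with $\beta_1 \geq s+1$, so $h = s+1$ is a first-row hook length. Either way $\lambda$ has a cell of hook length $s$ or $s+1$, so it is not $(s,s+1)$-core.

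The only delicate point---the main obstacle---is to set up the beta-number/hook-length dictionary cleanly and to handle the small boundary cases (for instance $\beta_1 = s$, where $\beta_1 - s = 0$ and one must use that $0$ is excluded as a beta-number, and the degenerate case $\ell = 1$). Once the first-row characterization is in place, the gap-$\geq 2$ property of partitions into distinct parts makes the two target values $s$ and $s+1$ impossible to avoid simultaneously, and the argument closes in a couple of lines.
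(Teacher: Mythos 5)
Your proof is correct, but it takes a genuinely different route from the paper's. The paper argues directly on the Young diagram: assuming $\lambda$ is $(s,s+1)$-core but has some hook length exceeding $s$, it takes the \emph{smallest} such hook length $t$, observes that $\lambda$ is then $(t-1,t-2)$-core, and derives a contradiction from the fact that, for a partition into distinct parts, the cell immediately to the right of a cell of hook length $t$ has hook length $t-1$ or $t-2$ (distinctness guarantees at most one lower row terminates in the intervening column, and also that a cell of hook length $t\geq 2$ has a right neighbor). That argument is completely self-contained and needs no machinery. Your argument instead passes to beta-numbers: perimeter equals $\beta_1$, distinctness becomes the twin-free (gap $\geq 2$) condition, and the classical abacus fact that the first-row hook lengths are $\{1,\dots,\beta_1\}\setminus\{\beta_1-\beta_j : j\geq 2\}$ reduces everything to the observation that the two consecutive integers $\beta_1-s$ and $\beta_1-s-1$ cannot both be beta-numbers. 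This is sound, including your treatment of the boundary cases ($\beta_1=s$ via $0\notin\beta(\lambda)$), but it does lean on the beta-set/hook-length dictionary, which is a genuine (if standard) lemma rather than a one-line remark. What your approach buys is twofold: it makes transparent \emph{why} two consecutive values $s,s+1$ are exactly what is needed (a single forbidden hook length could be dodged by a twin-free set, but not two adjacent ones), and it aligns with the machinery the paper itself introduces later in Section~\ref{sec:fibx}, where Lemma~\ref{lem:twinfree:core} uses precisely $\beta(\lambda)$, twin-freeness, and Xiong's characterization of $t$-cores; in that sense your proof unifies Lemma~\ref{lem:distinct:m1} with the paper's general argument, at the cost of the elementary, diagram-level brevity of the original.
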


\begin{proof}
  Recall that the perimeter of a partition $\lambda$ is the maximum hook
  length in $\lambda$. We therefore need to show that, if $\lambda$ is $( s, s
  + 1)$-core, then $\lambda$ is $( s, s + 1, s + 2, \ldots)$-core. Suppose
  otherwise, and let $t$ be the smallest hook length in $\lambda$ larger than
  $s$. By construction, $\lambda$ is $( t - 1, t - 2)$-core. Consider the
  Young diagram of $\lambda$, and focus on a cell $u$ with hook length $t$. A
  moment of reflection reveals that, since $\lambda$ has distinct parts, the
  cell to the right of $u$ has hook length $t - 1$ or $t - 2$. This
  contradicts the fact that $\lambda$ is $( t - 1, t - 2)$-core, and so our
  claim must be true.
\end{proof}

\begin{proof}[Proof of Theorem~\ref{thm:fib:distinct}]
  By virtue of
  Lemma~\ref{lem:distinct:m1}, we need to show that there are $F_{s + 1}$ many
  partitions into distinct parts with perimeter strictly less than $s$. One
  checks directly that this is true for $s = 1$ (in which case, $F_2 = 1$ and
  the relevant set of partitions consists of the empty partition only) and $s
  = 2$ (in which case, $F_3 = 2$ and the relevant set of partitions consists
  of the partition $( 1)$ and the empty partition).
  
  Let $s \geq 3$ and, for the purpose of induction, suppose that the
  number of partitions into distinct parts with perimeter less than $r$ is
  given by $F_{r + 1}$ for all $r < s$. Let $\lambda = ( \lambda_1, \lambda_2,
  \ldots)$ be a partition into distinct parts with perimeter less than $s$.
  Then exactly one of the following two cases applies:
  \begin{enumerate}
    \item The largest part $\lambda_1$ satisfies $\lambda_1 > \lambda_2 + 1$.
    Then, consider the partition $\lambda' = ( \lambda_1 - 1, \lambda_2,
    \lambda_3, \ldots)$. By the assumption on $\lambda_1$, the partition
    $\lambda'$ still has distinct parts. On the other hand, the perimeter of
    $\lambda'$ is one less than the perimeter of $\lambda$. In fact,
    $\lambda'$ can be any of the $F_s$ many partitions into distinct parts
    with perimeter less than $s - 1$.
    
    \item The largest part $\lambda_1$ satisfies $\lambda_1 = \lambda_2 + 1$.
    In that case, consider the partition $\lambda' = ( \lambda_2, \lambda_3,
    \ldots)$, which has distinct parts. Clearly, the perimeter of $\lambda'$
    is two less than the perimeter of $\lambda$. Again, $\lambda'$ can be any
    of the $F_{s - 1}$ many partitions into distinct parts with perimeter less
    than $s - 2$.
  \end{enumerate}
  Taken together, we find that the number of partitions into distinct parts
  with perimeter less than $s$ is given by $F_s + F_{s - 1} = F_{s + 1}$, as
  claimed.
\end{proof}

Now, we establish Theorem~\ref{thm:distinct-odd} via a natural variation of
our proof of Theorem~\ref{thm:fib:distinct}. Recall that the perimeter of a
partition is the maximum hook length in the partition. In the spirit of
Euler's result, Theorem~\ref{thm:distinct-odd} claims that, for $M \geq
1$, the number of partitions into distinct parts with perimeter $M$ is equal
to the number of partitions into odd parts with perimeter $M$, and that this
common number is $F_M$.

\begin{proof}[Proof of Theorem~\ref{thm:distinct-odd}]
  By Lemma~\ref{lem:distinct:m1}, a
  partition into distinct parts is $( s, s + 1)$-core if and only if it has
  perimeter at most $s - 1$. Hence, Theorem~\ref{thm:fib:distinct} can be
  rephrased as saying that there are $F_{M + 2}$ many partitions into distinct
  parts with perimeter at most $M$. Consequently, there are $F_M = F_{M + 2} -
  F_{M + 1}$ many partitions into distinct parts with perimeter exactly $M$.
  This verifies the first part of Theorem~\ref{thm:distinct-odd}.
  
  It remains to prove that there are also $F_M$ partitions into odd parts with
  perimeter $M$. We proceed using a variation of our proof of
  Theorem~\ref{thm:fib:distinct}. Again, it is straightforward to verify the
  claim for $M = 1$ and $M = 2$. For the purpose of induction suppose that,
  for all $m < M$, there are $F_m$ many partitions into odd parts with
  perimeter $m$. Let $\lambda = ( \lambda_1, \lambda_2, \ldots)$ be a
  partition into odd parts with perimeter $M$. Then exactly one of the
  following two cases applies:
  \begin{enumerate}
    \item The largest part $\lambda_1$ satisfies $\lambda_1 > \lambda_2 + 1$.
    Then, consider the partition $\lambda' = ( \lambda_1 - 2, \lambda_2,
    \lambda_3, \ldots)$. Clearly, the parts of $\lambda'$ are all odd, and the
    perimeter of $\lambda'$ is $M - 2$. Evidently, $\lambda'$ can be any of
    the $F_{M - 2}$ many partitions into odd parts with perimeter $M - 2$.
    
    \item The largest part $\lambda_1$ satisfies $\lambda_1 = \lambda_2$. In
    that case, consider the partition $\lambda' = ( \lambda_2, \lambda_3,
    \ldots)$. Clearly, the parts of $\lambda'$ are all odd, and the perimeter
    of $\lambda'$ is $M - 1$. Again, $\lambda'$ can be any of the $F_{M - 1}$
    many partitions into odd parts with perimeter $M - 1$.
  \end{enumerate}
  Taken together, we find that the number of partitions into odd parts with
  perimeter $M$ is given by $F_{M - 2} + F_{M - 1} = F_M$.
\end{proof}

\section{An explicit bijection}\label{sec:euler}

Partition theorists are often interested in bijective proofs of statements of
equinumerosity. In the present discussion, a combination and comparison of the
recursive proofs of Theorems~\ref{thm:distinct-odd} and \ref{thm:fib:distinct}
does yield an explicit bijection between partitions into distinct parts with
perimeter $M$ and partitions into odd parts with perimeter $M$.

Let $\mathcal{C}$ be the set of all compositions with parts $1$ and $2$, and
such that the last part is not a $2$. For instance, $\mathcal{C}$ contains the
following compositions $\mu = ( \mu_1, \mu_2, \ldots)$ of $| \mu | = \mu_1 +
\mu_2 + \ldots = n$, for $n = 0, 1, \ldots 5$.

\begin{table}[h]
  \begin{equation*}
    \begin{array}{|l|l|l|}
       \hline
       \text{compositions $\mu$ in $\mathcal{C}$} & | \mu | & \#\\
       \hline
       () & 0 & 1\\
       \hline
       ( 1) & 1 & 1\\
       \hline
       ( 1, 1) & 2 & 1\\
       \hline
       ( 1, 1, 1), ( 2, 1) & 3 & 2\\
       \hline
       ( 1, 1, 1, 1), ( 1, 2, 1), ( 2, 1, 1) & 4 & 3\\
       \hline
     \end{array}
  \end{equation*}
  \caption{\label{tbl:C:small}Small compositions $\mu \in \mathcal{C}$ ranked
  by $| \mu |$}
\end{table}

It is straightforward to enumerate compositions of $M$ in $\mathcal{C}$.

\begin{lemma}
  \label{lem:C:F}For $M \geq 1$, there are $F_M$ many compositions $\mu
  \in \mathcal{C}$ with $| \mu | = M$.
\end{lemma}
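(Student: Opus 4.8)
The plan is to let $c_M$ denote the number of compositions $\mu \in \mathcal{C}$ with $|\mu| = M$ and to show that the sequence $(c_M)$ satisfies the Fibonacci recurrence together with matching initial conditions. Since $F_1 = F_2 = 1$ and $F_M = F_{M-1} + F_{M-2}$ for $M \geq 3$, it suffices to verify $c_1 = c_2 = 1$ and $c_M = c_{M-1} + c_{M-2}$ for $M \geq 3$; the claim $c_M = F_M$ then follows by induction on $M$, paralleling the inductive arguments already used for Theorems~\ref{thm:fib:distinct} and \ref{thm:distinct-odd}.

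To obtain the recurrence I would condition on the first part $\mu_1$ of a composition $\mu = (\mu_1, \mu_2, \ldots) \in \mathcal{C}$ with $|\mu| = M \geq 3$. If $\mu_1 = 1$, then deleting it leaves a composition $(\mu_2, \mu_3, \ldots)$ whose parts still lie in $\{1, 2\}$ and whose last part is again not a $2$, that is, an element of $\mathcal{C}$ of size $M - 1$; conversely, prepending a $1$ to any such composition returns an element of $\mathcal{C}$ of size $M$, so this case contributes exactly $c_{M-1}$. If instead $\mu_1 = 2$, deleting it leaves an element of $\mathcal{C}$ of size $M - 2$, and prepending a $2$ to any composition in $\mathcal{C}$ of size $M - 2$ yields an element of $\mathcal{C}$ of size $M$; this case contributes $c_{M-2}$. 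Adding the two disjoint cases gives $c_M = c_{M-1} + c_{M-2}$.

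The one point demanding care, and the only real obstacle, is the interaction of the last-part-not-$2$ constraint with prepending a $2$: prepending a $2$ to the \emph{empty} composition produces $(2)$, which is excluded from $\mathcal{C}$. This is precisely why the recurrence is asserted only for $M \geq 3$, where $M - 2 \geq 1$ forces the size-$(M-2)$ compositions to be nonempty, so that their genuine final part of $1$ is preserved under prepending. The base cases are then handled by direct inspection; as recorded in Table~\ref{tbl:C:small}, the only compositions are $(1)$ for $M = 1$ and $(1,1)$ for $M = 2$, giving $c_1 = c_2 = 1$, and the induction closes.

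An alternative, generating-function-free shortcut would be to invoke the classical fact that the number of compositions of $M$ into parts $1$ and $2$ (with no restriction on the final part) is $F_{M+1}$. Removing a trailing $2$ gives a bijection between those ending in $2$ and all compositions of $M - 2$ into $1$s and $2$s, of which there are $F_{M-1}$. The compositions in $\mathcal{C}$ are exactly those not ending in $2$, so their number is $F_{M+1} - F_{M-1} = F_M$ by the standard Fibonacci identity. I would nonetheless present the self-contained recursion above, since it keeps the section independent of the tiling interpretation and matches the tone of the surrounding inductive proofs.
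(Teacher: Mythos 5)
Your proof is correct. It is worth noting that the paper does not actually write out a proof of Lemma~\ref{lem:C:F} at all: it declares the enumeration straightforward and points to the well-known fact (Stanley, \cite[Chapter~1, Exercise~14(c)]{stanley-ec1}) that the number of compositions of $M$ into parts $1$ and $2$ is $F_{M+1}$, leaving the translation to $\mathcal{C}$ implicit. Your ``alternative shortcut'' is essentially that implicit route (one can make it even cleaner than your subtraction argument: for $M \geq 1$ every $\mu \in \mathcal{C}$ ends in a $1$, and deleting that final $1$ is a bijection onto all $\{1,2\}$-compositions of $M-1$, of which there are $F_M$). Your primary argument --- conditioning on the \emph{first} part to get $c_M = c_{M-1} + c_{M-2}$ with base cases $c_1 = c_2 = 1$ --- is a genuinely self-contained alternative that avoids citing the classical count, and you correctly isolate the one delicate point, namely that prepending a $2$ to the empty composition leaves $\mathcal{C}$, which is exactly why the recursion starts at $M \geq 3$. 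Either route is fine; yours has the merit of being complete where the paper defers to the literature, and it matches the inductive style of the proofs of Theorems~\ref{thm:fib:distinct} and \ref{thm:distinct-odd}.
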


For instance, a well-known equivalent version of this count appears as an
exercise in \cite[Chapter~1, Exercise 14(c)]{stanley-ec1}, where the reader
is asked to show that the number of compositions $\mu$ of $M$ into parts $1$
and $2$ is $F_{M + 1}$.

Next, we introduce bijections between $\mathcal{C}$ and the sets of partitions
into distinct (respectively, odd) parts. Combining these two bijections, we
then obtain a bijection, preserving perimeters, between partitions into
distinct parts and partitions into odd parts. In particular, this fact implies
Theorems~\ref{thm:distinct-odd} and \ref{thm:fib:distinct}.

\begin{theorem}
  The map $\mu \mapsto \lambda_d ( \mu)$, described below, is a bijection
  between $\mathcal{C}$ and the set of partitions into distinct parts.
  Likewise, the map $\mu \mapsto \lambda_o ( \mu)$ is a bijection between
  $\mathcal{C}$ and the set of partitions into odd parts. Moreover, the
  perimeter of the partitions $\lambda_d ( \mu)$ and $\lambda_o ( \mu)$ is $|
  \mu |$.
\end{theorem}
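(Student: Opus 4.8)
The plan is to realize each of the two maps as the \emph{inverse} of the recursive reduction used in the proof of Theorem~\ref{thm:fib:distinct} (for $\lambda_d$) and of Theorem~\ref{thm:distinct-odd} (for $\lambda_o$), reading a composition $\mu = (\mu_1, \ldots, \mu_k) \in \mathcal{C}$ from right to left as a list of instructions that grow a partition one building block at a time. For $\lambda_d(\mu)$, starting from the empty partition, a part $\mu_i = 1$ increments the largest part by $1$ (creating the part $1$ if the partition is currently empty), while a part $\mu_i = 2$ prepends a new largest part equal to one more than the current largest part; these invert cases (a) and (b) in the proof of Theorem~\ref{thm:fib:distinct}. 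For $\lambda_o(\mu)$, a part $\mu_i = 1$ prepends a duplicate of the current largest part (again creating the part $1$ from the empty partition) and a part $\mu_i = 2$ increases the largest part by $2$; these invert cases (b) and (a), respectively, in the proof of Theorem~\ref{thm:distinct-odd}. I would then establish three assertions: that the outputs lie in the claimed classes, that their perimeter equals $|\mu|$, and that both maps are bijections.

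First I would check, by induction on $k$, that each building block keeps $\lambda_d(\mu)$ a partition into \emph{distinct} parts and $\lambda_o(\mu)$ a partition into \emph{odd} parts: incrementing the largest part or prepending a strictly larger part preserves distinctness, while prepending a duplicate of the (odd) largest part or adding $2$ to it preserves oddness. In the same induction I would track the maximum hook length and verify that each of the four building blocks raises the perimeter by exactly the value of the composition part it reads ($+1$ for a $1$, $+2$ for a $2$). Hence the perimeter of both $\lambda_d(\mu)$ and $\lambda_o(\mu)$ is $\mu_1 + \cdots + \mu_k = |\mu|$, which is the ``moreover'' clause and merely restates the perimeter bookkeeping already present in the two recursive proofs.

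For bijectivity I would exhibit explicit inverses, namely the forward reductions themselves. For a partition into distinct parts, exactly one of $\lambda_1 > \lambda_2 + 1$ or $\lambda_1 = \lambda_2 + 1$ holds (with $\lambda_2 = 0$ when there is a single part), so the reduction is deterministic; emitting a $1$ in the first case and a $2$ in the second (multi-part) case, and terminating at the empty partition, yields a composition with parts in $\{1,2\}$, and I would show its final emitted part must be a $1$, so the output lands in $\mathcal{C}$. An analogous deterministic reduction, keyed on $\lambda_1 > \lambda_2 + 1$ versus $\lambda_1 = \lambda_2$, inverts $\lambda_o$. A short induction on the perimeter then shows these reductions are two-sided inverses of the building-block constructions, proving both maps are bijections. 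Composing, $\lambda_o \circ \lambda_d^{-1}$ is a perimeter-preserving bijection between partitions into distinct parts and partitions into odd parts, and Lemma~\ref{lem:C:F} supplies the common count $F_M$, re-proving Theorems~\ref{thm:fib:distinct} and~\ref{thm:distinct-odd}.

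The main obstacle I anticipate is the boundary bookkeeping that makes the defining condition of $\mathcal{C}$ (``the last part is not a $2$'') come out exactly right. The subtlety is that the rule ``$\lambda_1 = \lambda_2 + 1$'' behaves differently on the single-part partition $(1)$, where deleting the part lowers the perimeter by only $1$ rather than $2$; correspondingly the terminal base step must emit a $1$, not a $2$. Verifying that the empty partition is the only one reached in a single reduction step from $(1)$ — and hence that every produced composition ends in a $1$, every $\mu \in \mathcal{C}$ is hit exactly once, and the reductions genuinely invert the constructions at this last step — is the delicate point. Once the empty partition is matched with the empty composition and the single-part case is handled as above, the remaining verifications are the routine inductions indicated.
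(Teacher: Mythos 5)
Your proposal is correct and is essentially the paper's own proof: the paper defines $\lambda_d(\mu)$ and $\lambda_o(\mu)$ by exactly the same recursion (peeling off $\mu_1$, i.e., applying the parts of $\mu$ from right to left via your four building blocks), and it handles distinctness/oddness, the perimeter claim, and bijectivity by appeal to the reductions in the proofs of Theorems~\ref{thm:fib:distinct} and~\ref{thm:distinct-odd}, just as you do. Your write-up merely spells out the inverse-map and boundary-case details (the terminal step at $(1)$ forcing the last part of $\mu$ to be $1$) that the paper labels as straightforward to verify.
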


\begin{proof}
  Let $\mu \in \mathcal{C}$. We assign a partition $\lambda_d = \lambda_d (
  \mu)$ to $\mu$ by the following recursive recipe. If $\mu = ()$ or $\mu = (
  1)$, then $\lambda_d = \mu$. Otherwise, write $\mu = ( \mu_1, \mu')$ with
  $\mu_1 \in \{ 1, 2 \}$ and $\mu' \in \mathcal{C}$. Suppose that $\lambda' =
  ( \lambda_1, \lambda_2, \lambda_3, \ldots)$ is the partition assigned to
  $\mu'$.
  \begin{enumerate}
    \item If $\mu_1 = 1$, then $\lambda_d = ( \lambda_1 + 1, \lambda_2,
    \lambda_3, \ldots)$.
    
    \item If $\mu_1 = 2$, then $\lambda_d = ( \lambda_1 + 1, \lambda_1,
    \lambda_2, \lambda_3, \ldots)$.
  \end{enumerate}
  By construction, the partition $\lambda_d$ has distinct parts. In fact, it
  is straightforward to verify (in the spirit of the proof of
  Theorem~\ref{thm:fib:distinct}) that the map $\mu \mapsto \lambda_d ( \mu)$
  describes a bijection between $\mathcal{C}$ and the set of partitions into
  distinct parts.
  
  Analogously, we assign a partition $\lambda_o = \lambda_o ( \mu)$ to $\mu
  \in \mathcal{C}$ as follows. Again, if $\mu = ()$ or $\mu = ( 1)$, then
  $\lambda_o = \mu$. Otherwise, write $\mu = ( \mu_1, \mu')$ with $\mu_1 \in
  \{ 1, 2 \}$ and $\mu' \in \mathcal{C}$. Suppose that $\lambda' = (
  \lambda_1, \lambda_2, \lambda_3, \ldots)$ is the partition assigned to
  $\mu'$.
  \begin{enumerate}
    \item If $\mu_1 = 1$, then $\lambda_o = ( \lambda_1, \lambda_1, \lambda_2,
    \lambda_3, \ldots)$.
    
    \item If $\mu_1 = 2$, then $\lambda_o = ( \lambda_1 + 2, \lambda_2,
    \lambda_3, \ldots)$.
  \end{enumerate}
  Then, the map $\mu \mapsto \lambda_o ( \mu)$ describes a bijection between
  $\mathcal{C}$ and the set of partitions into odd parts.
  
  Combining these two bijections, we have a bijection between partitions
  $\lambda_d$ into distinct parts and partitions $\lambda_o$ into odd parts.
  \begin{equation*}
    \lambda_d = \lambda_d ( \mu) \quad \leftrightarrow \quad
     \mu \quad \leftrightarrow \quad \lambda_o ( \mu) =
     \lambda_o
  \end{equation*}
  It also follows from the respective constructions that the partitions
  $\lambda_d ( \mu)$ and $\lambda_o ( \mu)$ both have perimeter $| \mu |$.
\end{proof}

\begin{example}
  The following table lists all $13$ partitions into distinct (respectively,
  odd) parts with perimeter at most $5$, together with the composition $\mu$
  in $\mathcal{C}$ they get matched with.
  \begin{equation*}
    \begin{array}{|l|l|l|}
       \hline
       \mu & \lambda_d & \lambda_o\\
       \hline
       () & () & ()\\
       \hline
       ( 1) & ( 1) & ( 1)\\
       \hline
       ( 1, 1) & ( 2) & ( 1, 1)\\
       \hline
       ( 1, 1, 1) & ( 3) & ( 1, 1, 1)\\
       ( 2, 1) & ( 2, 1) & ( 3)\\
       \hline
       ( 1, 1, 1, 1) & ( 4) & ( 1, 1, 1, 1)\\
       ( 1, 2, 1) & ( 3, 1) & ( 3, 3)\\
       ( 2, 1, 1) & ( 3, 2) & ( 3, 1)\\
       \hline
       ( 1, 1, 1, 1, 1) & ( 5) & ( 1, 1, 1, 1, 1)\\
       ( 1, 1, 2, 1) & ( 4, 1) & ( 3, 3, 3)\\
       ( 1, 2, 1, 1) & ( 4, 2) & ( 3, 3, 1)\\
       ( 2, 1, 1, 1) & ( 4, 3) & ( 3, 1, 1)\\
       ( 2, 2, 1) & ( 3, 2, 1) & ( 5)\\
       \hline
     \end{array}
  \end{equation*}
  In particular, note that the composition $\mu = ( 1, 2, 1)$ corresponds to
  the partitions
  \begin{eqnarray*}
    \lambda_d ( \mu) & = & ( 3, 1),\\
    \lambda_o ( \mu) & = & ( 3, 3) .
  \end{eqnarray*}
  This illustrates that, while the present bijection between partitions into
  distinct parts and partitions into odd parts preserves the perimeter, it
  does not preserve the size of the partitions. Therefore, our bijection is of
  a rather different nature compared to the bijections underlying Euler's
  theorem and its generalizations.
\end{example}

\section{A generalization}\label{sec:fibx}

Recall that we showed in Theorem~\ref{thm:fib:distinct} that, as conjectured
by Amdeberhan \cite{amdeberhan-conj}, $( s - 1, s)$-core partitions into
distinct parts are counted by the Fibonacci numbers $F_s$. In this section, we
generalize this result and enumerate $( s, d s - 1)$-core partitions into
distinct parts for any $d \geq 1$. That is, we prove
Theorem~\ref{thm:fibx:distinct} from the introduction, which is restated here
for the reader's convenience.

\begin{theorem}
  \label{thm:fibx:distinct2}The number $N_d ( s)$ of $( s, d s - 1)$-core
  partitions into distinct parts is characterized by $N_d ( 1) = 1$, $N_d ( 2)
  = d$ and, for $s \geq 3$,
  \begin{equation}
    N_d ( s) = N_d ( s - 1) + d N_d ( s - 2) . \label{eq:core:rec}
  \end{equation}
\end{theorem}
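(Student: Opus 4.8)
The plan is to generalize the structural approach that worked so cleanly for the case $d=1$. There, Lemma~\ref{lem:distinct:m1} reduced the problem to counting partitions into distinct parts with bounded perimeter, and the Fibonacci recurrence emerged from a case analysis on whether the two largest parts differ by exactly $1$. For general $d$, however, the relevant pair of core conditions is $(s, ds-1)$ rather than $(s, s+1)$, so the same ``consecutive hook length'' trick no longer forces the partition to be core for all larger hook lengths. Thus the first task is to understand precisely which hook lengths are forbidden. I would begin by fixing the combinatorial model for $(s,t)$-core partitions: the standard encoding via the \emph{beta-set} (the set of first-column hook lengths, or equivalently the positions of the ``up-steps'' on the boundary path), under which being $t$-core means the beta-set is closed under subtracting $t$, and the distinct-parts condition becomes a ``twin-free'' condition on the beta-set (no two consecutive integers both present). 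This is the language the abstract promises (``tuples of nested twin-free sets''), so I expect the proof to organize the $(s, ds-1)$-core distinct partitions into such tuples.

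Next I would establish the recurrence \eqref{eq:core:rec} by exhibiting a bijection that decomposes an $(s, ds-1)$-core distinct partition into smaller pieces. The shape of the recurrence, $N_d(s) = N_d(s-1) + d\, N_d(s-2)$, strongly suggests a two-case split analogous to the $d=1$ argument, but where the second case (reducing $s$ by $2$) carries a multiplicity of $d$. Concretely, I would analyze the largest part or equivalently the top of the beta-set: in one case a single reduction step lands us in the $N_d(s-1)$ family, while in the other case the geometry of the $ds-1$ condition creates $d$ distinct sub-configurations that each map into the $N_d(s-2)$ family. The factor $d$ should arise because the gap condition imposed by the hook length $ds-1$ (as opposed to $s+1$) leaves $d$ admissible ``slots'' rather than one. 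I would make this precise by tracking how the forbidden hook lengths $s$ and $ds-1$ interact near the boundary of the Young diagram.

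The main obstacle, I expect, will be controlling the interaction between the two coprimality/core conditions simultaneously while preserving the distinct-parts (twin-free) constraint throughout the recursive step. For $d=1$ the two moduli $s$ and $s+1$ are adjacent, which is what made the elementary perimeter argument possible; for larger $d$ the modulus $ds-1$ is far from $s$, so removing or adjusting cells to reduce $s$ can easily violate the $ds-1$-core condition or reintroduce a forbidden pair of equal/adjacent parts. I would therefore spend the most care verifying that the proposed decomposition map and its inverse both respect all three conditions, and that the $d$ branches in the second case are genuinely disjoint and exhaustive. I would also need to check the base cases $N_d(1)=1$ and $N_d(2)=d$ directly from the beta-set description, which should be a short finite computation that simultaneously pins down the source of the multiplicity $d$ and serves as a sanity check on the recursive step.
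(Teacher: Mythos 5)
Your setup is exactly the paper's: the beta-set encoding, the observation that the distinct-parts condition makes the beta-set twin-free, and the repackaging of an $(s,ds-1)$-core partition into distinct parts as a nested tuple $(X_1,\ldots,X_d)$ of twin-free subsets of $\{1,2,\ldots,s-1\}$ is precisely the content of Lemma~\ref{lem:twinfree:core}. But there is a genuine gap in the recursion you propose, and it sits at the very spot you flag as ``the main obstacle.'' The bijection produces tuples subject to one extra constraint, $s-1\nin X_d$ (this is the shadow of the $(ds-1)$-core condition: $ds-1$ cannot occur as a first-column hook length), and this constraint does \emph{not} recurse. If you split on whether $s-1$ lies in some $X_j$, then in the first case you obtain nested twin-free tuples inside $\{1,\ldots,s-2\}$ with no restriction on whether $s-2\in X_d$, so you are not in the $N_d(s-1)$ family; and in the second case the admissible cutoff indices $J$ (with $s-1\in X_j$ exactly for $j\le J$) range over $\{1,\ldots,d-1\}$, not $\{1,\ldots,d\}$, because $s-1\nin X_d$ forces $J\le d-1$. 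So the natural case analysis yields $d-1$ branches, not $d$, and every branch lands in an \emph{unrestricted} family, not in $N_d(s-2)$. The claimed decomposition ``$N_d(s-1)$ plus $d$ copies of $N_d(s-2)$'' is therefore not what your split produces, and no amount of care in verifying well-definedness will change that count.

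The missing idea, which is how the paper resolves this, is to introduce the auxiliary quantity $M_d(s)$ counting nested twin-free tuples in $\{1,\ldots,s-1\}$ \emph{without} the top-element restriction (Lemma~\ref{lem:twinfree}). For $M_d$ the case analysis does close up on itself, giving $M_d(s)=M_d(s-1)+dM_d(s-2)$ with $M_d(1)=1$ and $M_d(2)=d+1$; the same analysis applied to the restricted tuples gives only the mixed relation $N_d(s)=M_d(s-1)+(d-1)M_d(s-2)$, and the recurrence \eqref{eq:core:rec} for $N_d$ then follows by an algebraic regrouping of the $M_d$ recurrence, together with checking $N_d(3)$ and $N_d(4)$ by hand. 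Some such device (an auxiliary unrestricted count, or a cleverer bijection that you would still have to supply) is indispensable. Note also that your fallback of working directly on the partition, decrementing the largest part as in the $d=1$ proof of Theorem~\ref{thm:fib:distinct}, is itself problematic here: decreasing the top first-column hook length by $1$ generally destroys the $s$-core condition (closure of the beta-set under subtracting $s$), which is presumably why the paper describes the direct generalization of that argument as ``somewhat more technical'' and takes the route through nested twin-free tuples instead.
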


We are confident that a suitable generalization of our proof of
Theorem~\ref{thm:fib:distinct} can be used to prove this result. \ Since the
details appear to be somewhat more technical, we instead offer an alternative
proof, inspired by the approach taken in \cite{xiong-cores}.

\begin{example}
  Versions of the numbers $N_d ( s)$ in Theorem~\ref{thm:fibx:distinct2} have
  been studied in the literature since Lucas, and are usually referred to as
  generalized Fibonacci numbers or generalized Fibonacci polynomials (in the
  variable $d$). For further information and references, we refer the
  interested reader to the recent paper \cite{acms-fibo}. The first few
  polynomials $N_d ( s)$, for $s = 1, 2, \ldots, 7$, are
  \begin{equation*}
    1, \quad d, \quad 2 d, \quad d ( d + 2) 
     , \quad d ( 3 d + 2), \quad d ( d^2 + 5 d + 2),
     \quad d ( 4 d^2 + 7 d + 2) .
  \end{equation*}
  Of course, we recover the usual Fibonacci numbers upon setting $d = 1$.
\end{example}

In preparation for the proof of Theorem~\ref{thm:fibx:distinct2}, we say that
a set $X \subseteq \mathbb{Z}$ is {\emph{twin-free}} if there is no $x \in X$
such that $\{ x, x + 1 \} \subseteq X$. As the following result shows, the
number of tuples of nested twin-free sets satisfies the same recursive
relation that is claimed for the core partitions in
Theorem~\ref{thm:fibx:distinct2}. Note, however, that the initial conditions
differ.

\begin{lemma}
  \label{lem:twinfree}Let $M_d ( s)$ denote the number of tuples $( X_1, X_2,
  \ldots, X_d)$ of twin-free sets such that $X_d \subseteq X_{d - 1} \subseteq
  \ldots \subseteq X_1 \subseteq \{ 1, 2, \ldots, s - 1 \}$. Then, $M_d ( 1) =
  1$, $M_d ( 2) = d + 1$ and, for $s \geq 3$,
  \begin{equation}
    M_d ( s) = M_d ( s - 1) + d M_d ( s - 2) . \label{eq:twinfree:rec}
  \end{equation}
\end{lemma}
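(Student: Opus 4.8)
The plan is to mirror the two-case analysis from the proof of Theorem~\ref{thm:fib:distinct}, now organized around the largest admissible element $s-1$. I would first dispose of the base cases directly. When $s = 1$ the ground set $\{1, \ldots, s-1\}$ is empty, so every $X_i$ is forced to be empty and $M_d(1) = 1$. When $s = 2$ the ground set is $\{1\}$, every subset is automatically twin-free, and a nested chain $X_d \subseteq \cdots \subseteq X_1 \subseteq \{1\}$ is determined by the number $k \in \{0, 1, \ldots, d\}$ of sets that equal $\{1\}$, giving $M_d(2) = d + 1$.

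For the recursion with $s \geq 3$, I would split the tuples according to whether $s-1 \in X_1$. Since the chain is nested with $X_1$ largest, $s-1$ belongs to some $X_i$ if and only if $s-1 \in X_1$, and in that case the indices $i$ with $s-1 \in X_i$ form an initial segment $\{1, \ldots, k\}$ with $1 \le k \le d$. If $s-1 \notin X_1$, then every $X_i$ is a twin-free subset of $\{1, \ldots, s-2\}$ and the nesting is unchanged, so these tuples are counted by $M_d(s-1)$.

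The heart of the argument is the remaining case $s-1 \in X_1$. Here the twin-free hypothesis forces $s-2 \notin X_i$ for each $i \le k$, so $X_i \setminus \{s-1\} \subseteq \{1, \ldots, s-3\}$. The nesting $X_{k+1} \subseteq X_k$ together with $s-1 \notin X_{k+1}$ then gives $X_{k+1} \subseteq X_k \setminus \{s-1\} \subseteq \{1, \ldots, s-3\}$, and hence every one of $X_{k+1}, \ldots, X_d$ already lies in $\{1, \ldots, s-3\}$. Deleting $s-1$ from each of $X_1, \ldots, X_k$ therefore converts the tuple into a nested chain of twin-free subsets of $\{1, \ldots, (s-2) - 1\}$.

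I would finish by checking that, for each fixed $k$, this deletion is a bijection onto the tuples counted by $M_d(s-2)$: conversely, reinserting $s-1$ into the first $k$ sets of any such chain yields a valid tuple, since adjoining $s-1$ to a twin-free subset of $\{1, \ldots, s-3\}$ creates no twin (neither $s-2$ nor $s$ is present). Summing over $k = 1, \ldots, d$ then contributes $d\, M_d(s-2)$, and adding the two cases yields \eqref{eq:twinfree:rec}. The step I expect to require the most care is the collapse in the second case, namely verifying that the entire chain, not merely the sets containing $s-1$, must avoid $s-2$ and thus live in $\{1, \ldots, s-3\}$; this is precisely where the twin-free condition is essential, and it is what makes the coefficient of $M_d(s-2)$ equal to $d$.
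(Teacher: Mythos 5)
Your proposal is correct and follows essentially the same route as the paper's proof: the same case split on whether $s-1$ appears in the chain (with the number $k$ of sets containing $s-1$ playing the role of the paper's index $J$), the same deletion of $s-1$ yielding a bijection with tuples counted by $M_d(s-2)$, and the same use of twin-freeness to force $s-2$ out of the chain. The only difference is cosmetic: you verify the reinsertion (inverse) direction explicitly, which the paper leaves implicit.
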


\begin{proof}
  Clearly, $M_d ( 1) = 1$ because in that case the only tuple $( X_1, X_2,
  \ldots, X_d)$ is the one with $X_1 = X_2 = \ldots = X_d = \{ \}$. On the
  other hand, $M_d ( 2) = d + 1$ because then all tuples $( X_1, X_2, \ldots,
  X_d)$ are of the form $X_j = \{ 1 \}$, if $j \leq J$, and $X_j = \{
  \}$, if $j > J$, for some $J \in \{ 0, 1, \ldots, d \}$.
  
  We may therefore suppose that $s \geq 2$. Let $( X_1, X_2, \ldots,
  X_d)$ be a tuple of twin-free sets such that $X_d \subseteq X_{d - 1}
  \subseteq \ldots \subseteq X_1 \subseteq \{ 1, 2, \ldots, s - 1 \}$. Then,
  exactly one of the following two possibilities is true:
  \begin{enumerate}
    \item \label{i:twinfree:1}None of the sets $X_1, X_2, \ldots, X_d$
    contains $s - 1$.
    
    \item \label{i:twinfree:2}There is an index $J \in \{ 1, 2, \ldots, d \}$
    such that $s - 1 \in X_j$ for all $j \leq J$ and $s - 1 \nin X_j$ for
    all $j > J$.
  \end{enumerate}
  In case \ref{i:twinfree:1}, our tuple $( X_1, X_2, \ldots, X_d)$ is one of
  the $M_d ( s - 1)$ many tuples of twin-free sets such that $X_d \subseteq
  X_{d - 1} \subseteq \ldots \subseteq X_1 \subseteq \{ 1, 2, \ldots, s - 2
  \}$.
  
  On the other hand, suppose case \ref{i:twinfree:2} holds with $J \in \{ 1,
  2, \ldots, d \}$. In that case, $s - 1 \in X_1$. Since $X_1$ is twin-free it
  follows that $s - 2 \nin X_1$, and hence $s - 2$ is not contained in any of
  the sets $X_1, X_2, \ldots, X_d$. Let $X_1', X_2', \ldots, X_d'$ be the sets
  obtained from $X_1, X_2, \ldots, X_d$ by removing $s - 1$ from these sets.
  That is, $X_j' = X_j - \{ s - 1 \}$. Observe that the tuple $( X_1', X_2',
  \ldots, X_d')$ can be any of the $M_d ( s - 2)$ many tuples of twin-free
  sets such that $X_d' \subseteq X_{d - 1}' \subseteq \ldots \subseteq X_1'
  \subseteq \{ 1, 2, \ldots, s - 3 \}$. Since $( X_1', X_2', \ldots, X_d')$
  together with the value of $J$ determines $( X_1, X_2, \ldots, X_d)$, we
  conclude that case \ref{i:twinfree:2} accounts for exactly $d M_d ( s - 2)$
  many tuples.
  
  The recursive relation \eqref{eq:twinfree:rec} follows upon combining these
  two cases.
\end{proof}

\begin{lemma}
  \label{lem:twinfree:core}$( s, d s - 1)$-core partitions into distinct parts
  are in bijective correspondence with tuples $( X_1, X_2, \ldots, X_d)$ of
  twin-free sets such that $X_d \subseteq X_{d - 1} \subseteq \ldots \subseteq
  X_1 \subseteq \{ 1, 2, \ldots, s - 1 \}$ and $s - 1 \nin X_d$.
\end{lemma}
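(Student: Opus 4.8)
The plan is to describe $(s, ds-1)$-core partitions into distinct parts via a concrete combinatorial encoding, and then recognize that encoding as the tuples of nested twin-free sets from Lemma~\ref{lem:twinfree}. The standard device for reasoning about $t$-core partitions is the \emph{beta-set} (or first-column hook length) encoding: a partition $\lambda = (\lambda_1, \ldots, \lambda_\ell)$ corresponds to the set of first-column hook lengths $\{\lambda_i + \ell - i : 1 \le i \le \ell\}$, and $\lambda$ being $t$-core translates into a downward-closure condition on this set modulo $t$ (if a beta-number $h \ge t$ is present, then $h - t$ must also be present). First I would recall this correspondence and record what the two conditions ``distinct parts'' and ``$(s, ds-1)$-core'' become. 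The distinctness of parts should correspond to a \emph{twin-free} type condition, since consecutive first-column hook lengths $h, h+1$ both appearing reflects a repeated part; this is exactly why twin-free sets enter.

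**Translating the two core conditions.** The heart of the argument is to unwind what simultaneous $s$-core and $(ds-1)$-core mean for the encoding. Since Lemma~\ref{lem:distinct:m1} tells us (in the $d=1$ case) that the perimeter is controlled, I expect the general picture to be that the relevant beta-numbers live in a bounded range governed by $s$, and that reducing them modulo $s$ produces a set inside $\{1, 2, \ldots, s-1\}$. The two moduli $s$ and $ds-1$ interact because $ds - 1 \equiv -1 \pmod{s}$, so the $(ds-1)$-core condition, read modulo $s$, refines the $s$-core data into $d$ nested ``layers.'' Concretely I would aim to show that a single $(s,ds-1)$-core partition into distinct parts decomposes into a tuple $(X_1, \ldots, X_d)$ where each $X_j \subseteq \{1, \ldots, s-1\}$ records the beta-numbers congruent to a given residue and lying in the $j$-th band of width $s$, with nesting $X_d \subseteq \cdots \subseteq X_1$ forced by the $s$-core downward closure and twin-freeness of each $X_j$ forced by the distinctness of parts. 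The boundary constraint $s - 1 \notin X_d$ should emerge from the $(ds-1)$-core condition at the top band: allowing $s-1$ in the deepest layer would create a hook length equal to $ds-1$.

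**Verifying the bijection is well-defined both ways.** Having proposed the map $\lambda \mapsto (X_1, \ldots, X_d)$, I would check it is a genuine bijection by constructing the inverse: given a nested tuple of twin-free sets with $s - 1 \notin X_d$, reassemble a beta-set by placing, for each $j$, the elements of $X_j$ into the appropriate residue-and-band positions, and confirm that (i) the resulting set is a valid beta-set of a partition into distinct parts, (ii) it is $s$-core, and (iii) it is $(ds-1)$-core. Steps (ii) and (iii) are where the nesting and the $s-1 \notin X_d$ conditions must be shown to be exactly equivalent to the two core properties, with nothing lost or gained. The main obstacle I anticipate is bookkeeping the precise index arithmetic: pinning down exactly which residue classes and which ``bands'' of size $s$ the sets $X_1, \ldots, X_d$ correspond to, and proving that the $(ds-1)$-core closure condition is equivalent to the combination of nesting plus the single exclusion $s-1 \notin X_d$, rather than some more complicated constraint. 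I would want to test the correspondence against the small cases $s=1,2$ and the known $d=1$ answer $F_{s+1}$ to make sure the exclusion $s-1 \notin X_d$ correctly converts the count $M_d(s)$ into $N_d(s)$; indeed, comparing the initial conditions ($M_d(2) = d+1$ versus $N_d(2) = d$) suggests that this single exclusion is precisely what drops exactly the tuples with $s-1 \in X_d$, which should be verifiable directly and serve as a useful sanity check before committing to the general index conventions.
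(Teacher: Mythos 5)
Your proposal takes essentially the same route as the paper's proof: Xiong's beta-set (first-column hook length) encoding, with distinctness of parts translating to twin-freeness, the $s$-core downward closure giving the nesting $X_d \subseteq \cdots \subseteq X_1$ across the $d$ bands of width $s$, and the exclusion $s - 1 \nin X_d$ coming from the bound on the largest hook length. The one step you leave to be filled in---that every beta-number of an $(s, ds-1)$-core partition into distinct parts is at most $ds-2$---is handled in the paper exactly via the tool you point to: since $s$-core implies $ds$-core, the partition is $(ds-1, ds)$-core, and Lemma~\ref{lem:distinct:m1} then bounds the perimeter by $ds-2$.
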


\begin{proof}
  Following \cite{xiong-cores}, given a partition $\lambda$, we denote with
  $\beta ( \lambda)$ the set of hook lengths $h ( u)$ where $u$ is a cell in
  the first column of $\lambda$. Clearly, the set $\beta ( \lambda)$ uniquely
  determines $\lambda$. Moreover, $\lambda$ is $t$-core if and only if, for
  any $x \in \beta ( \lambda)$ with $x \geq t$, we always have $x - t \in
  \beta ( \lambda)$ \cite[Lemma~2.1]{xiong-cores}. In particular, if
  $\lambda$ is $t$-core, then $\lambda$ is also $n t$-core for any $n
  \geq 1$. This implies that an $( s, d s - 1)$-core partition into
  distinct parts is also $( d s - 1, d s)$-core and hence, by
  Lemma~\ref{lem:distinct:m1}, has perimeter (maximum hook length) at most $d
  s - 2$.
  
  Let $\lambda$ be an $( s, d s - 1)$-core partition into distinct parts.
  Equivalently, $\lambda$ is an $s$-core partition into distinct parts with
  perimeter at most $d s - 2$. Therefore, the set $\beta ( \lambda)$ can be
  any twin-free set
  \begin{equation*}
    \beta ( \lambda) = \beta_1 ( \lambda) \cup \beta_2 ( \lambda) \cup \ldots
     \cup \beta_{d - 1} ( \lambda) \cup \beta_d ( \lambda)
  \end{equation*}
  where
  \begin{equation*}
    \beta_j ( \lambda) \subseteq \{ ( j - 1) s + 1, ( j - 1) s + 2, \ldots, j
     s - 1 \},
  \end{equation*}
  for $j \in \{ 1, 2, \ldots, d - 1 \}$, and
  \begin{equation*}
    \beta_d ( \lambda) \subseteq \{ ( d - 1) s + 1, ( d - 1) s + 2, \ldots, d
     s - 2 \} .
  \end{equation*}
  Attach to $\lambda$ the tuple $( X_1, X_2, \ldots, X_d)$ with
  \begin{equation*}
    X_j = \left\{ x - ( j - 1) s \; : \; x \in \beta_j (
     \lambda) \right\} .
  \end{equation*}
  By construction, $X_j \subseteq \{ 1, 2, \ldots, s - 1 \}$. Since $\lambda$
  has distinct parts, the sets $X_j$ are all twin-free. Recall that the
  condition that $\lambda$ is $s$-core is equivalent to the following: if $x
  \in \beta_j ( \lambda)$ with $j > 1$, then $x - s \in \beta_{j - 1} (
  \lambda)$. This translates into $X_d \subseteq X_{d - 1} \subseteq \ldots
  \subseteq X_1$. Finally, $s - 1 \nin X_d$ because $d s - 1 \nin \beta (
  \lambda)$. Since there are no further restrictions on the sets $X_j$, we
  have arrived at the bijective correspondence, as promised.
\end{proof}

Now, we are in a comfortable position to prove
Theorem~\ref{thm:fibx:distinct2}.

\begin{proof}[Proof of Theorem~\ref{thm:fibx:distinct2}]
  In light of the bijective
  correspondence established in Lemma~\ref{lem:twinfree:core}, $N_d ( s)$
  equals the number of tuples $( X_1, X_2, \ldots, X_d)$ of twin-free sets
  such that $X_d \subseteq X_{d - 1} \subseteq \ldots \subseteq X_1 \subseteq
  \{ 1, 2, \ldots, s - 1 \}$ and $s - 1 \nin X_d$. We need to show that $N_d (
  1) = 1$, $N_d ( 2) = d$ and $N_d ( s) = N_d ( s - 1) + d N_d ( s - 2)$. This
  is clearly a variation of Lemma~\ref{lem:twinfree} and, indeed, we can prove
  it along the same lines.
  
  As in the proof of Lemma~\ref{lem:twinfree}, we see that $N_d ( 1) = 1$,
  $N_d ( 2) = d$, the only difference being that in the latter case one tuple
  is excluded due to the condition $s - 1 \nin X_d$. Therefore, consider the
  case $s \geq 2$. Let $( X_1, X_2, \ldots, X_d)$ be a tuple of twin-free
  sets such that $X_d \subseteq X_{d - 1} \subseteq \ldots \subseteq X_1
  \subseteq \{ 1, 2, \ldots, s - 1 \}$ and $s - 1 \nin X_d$. Then, exactly one
  of the following two possibilities is true:
  \begin{enumerate}
    \item \label{i:twinfree:core:1}None of the sets $X_1, X_2, \ldots, X_d$
    contains $s - 1$.
    
    \item \label{i:twinfree:core:2}There is an index $J \in \{ 1, 2, \ldots, d
    - 1 \}$ such that $s - 1 \in X_j$ for all $j \leq J$ and $s - 1 \nin
    X_j$ for all $j > J$.
  \end{enumerate}
  In case \ref{i:twinfree:core:1}, our tuple $( X_1, X_2, \ldots, X_d)$ is one
  of the $M_d ( s - 1)$ many tuples from Lemma~\ref{lem:twinfree} of twin-free
  sets such that $X_d \subseteq X_{d - 1} \subseteq \ldots \subseteq X_1
  \subseteq \{ 1, 2, \ldots, s - 2 \}$.
  
  On the other hand, suppose case \ref{i:twinfree:core:2} holds with $J \in \{
  1, 2, \ldots, d - 1 \}$. As in the proof of Lemma~\ref{lem:twinfree}, let
  $X_j' = X_j - \{ s - 1 \}$. Again, the resulting tuple $( X_1', X_2',
  \ldots, X_d')$ can be any of the $M_d ( s - 2)$ many tuples of twin-free
  sets such that $X_d' \subseteq X_{d - 1}' \subseteq \ldots \subseteq X_1'
  \subseteq \{ 1, 2, \ldots, s - 3 \}$. Since $( X_1', X_2', \ldots, X_d')$
  together with the value of $J$ determines $( X_1, X_2, \ldots, X_d)$, we
  conclude that case \ref{i:twinfree:core:2} accounts for exactly $( d - 1)
  M_d ( s - 2)$ many tuples.
  
  Combining these two cases, we arrive at
  \begin{equation}
    N_d ( s) = M_d ( s - 1) + ( d - 1) M_d ( s - 2) . \label{eq:core:twinfree}
  \end{equation}
  The asserted recurrence relation \eqref{eq:core:rec} for $N_d ( s)$
  therefore follows from the recurrence relation \eqref{eq:twinfree:rec} for
  $M_d ( s)$. Indeed, for all $s \geq 5$,
  \begin{eqnarray*}
    N_d ( s) & = & [ M_d ( s - 2) + d M_d ( s - 3)] + ( d - 1) [ M_d ( s - 3)
    + d M_d ( s - 4)]\\
    & = & [ M_d ( s - 2) + ( d - 1) M_d ( s - 3)] + d [ M_d ( s - 3) + ( d -
    1) M_d ( s - 4)]\\
    & = & N_d ( s - 1) + d N_d ( s - 2) .
  \end{eqnarray*}
  It only remains to verify initial values. By \eqref{eq:core:twinfree}, we
  have $N_d ( 3) = M_d ( 2) + ( d - 1) M_d ( 1) = 2 d$ and $N_d ( 4) = M_d (
  3) + ( d - 1) M_d ( 2) = d M_d ( 2) + d M_d ( 1) = d^2 + 2 d$. These values
  indeed also satisfy the recursive relation \eqref{eq:core:rec} for $N_d (
  s)$, since $N_d ( 2) + d N_d ( 1) = 2 d$ and $N_d ( 3) + d N_d ( 2) = d^2 +
  2 d$.
\end{proof}

\section{Conclusion}

We proved an analog for partitions into distinct parts of Anderson's
Theorem~\ref{thm:anderson} specialized to $( s, s + 1)$-core partitions. More
generally, in Theorem~\ref{thm:fibx:distinct}, we enumerated $( s, d s -
1)$-core partitions into distinct parts. It would be interesting to further
generalize this result and determine a count for $( s, t)$-core partitions
into distinct parts, for any coprime $s$ and $t$. In this direction, we offer
the following conjecture for further motivation.

\begin{conjecture}
  \label{conj:2}If $s$ is odd, then the number of $( s, s + 2)$-core
  partitions into distinct parts equals $2^{s - 1}$.
\end{conjecture}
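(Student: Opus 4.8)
The plan is to proceed exactly as in the proof of Lemma~\ref{lem:twinfree:core}, encoding an $(s,s+2)$-core partition $\lambda$ through its first-column hook set $\beta(\lambda)$. Since $s$ is odd, $s$ and $s+2$ are coprime, so the numerical semigroup $\langle s, s+2\rangle$ has finitely many gaps, and the characterization of $t$-cores recalled in Lemma~\ref{lem:twinfree:core} (Xiong's Lemma~2.1) shows that $\beta(\lambda)$ is precisely a finite set of gaps that is closed under subtracting $s$ and subtracting $s+2$ whenever the result stays positive; equivalently, $\beta(\lambda)$ is an order ideal of the Anderson poset on the gaps, ordered by $x \preceq y \iff y-x \in \langle s, s+2\rangle$. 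As before, $\lambda$ has distinct parts if and only if $\beta(\lambda)$ is twin-free. So the quantity to compute is the number of twin-free order ideals of this poset, and finiteness is automatic since $\beta(\lambda)$ lives among the gaps.

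First I would reorganize $\beta = \beta(\lambda)$ by residues modulo $s$. Because $\beta$ is closed under subtracting $s$, for each residue $r \in \{1, \dots, s-1\}$ its elements form an initial segment $\{r, r+s, \dots, r+(h_r-1)s\}$, recording a height $h_r \ge 0$, and no multiple of $s$ occurs. The twin-free condition then says exactly that no two consecutive residues are simultaneously active (never $h_r, h_{r+1} > 0$), while closure under subtracting $s+2$, after tracking the residue shift $r \mapsto r-2 \pmod s$, becomes the inequalities $h_r \le h_{r-2}+1$ for $3 \le r \le s-1$, together with the boundary conditions $h_2 \le 1$ and $h_1 \le h_{s-1}+2$ arising from the two residues adjacent to $0$. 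I have checked that these reproduce the correct counts $A(1)=1$, $A(3)=4$, and $A(5)=16$, where $A(s)$ denotes the number of admissible height vectors.

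It then remains to show $A(s) = 2^{s-1}$, for which I would prove the recurrence $A(s) = 4\,A(s-2)$ with base case $A(1)=1$. The natural decomposition is by parity: since $s$ is odd, the shift $r \mapsto r-2$ threads the odd residues $s-2, s-4, \dots, 1$ and then, through the wrap at $0$, the even residues $s-1, s-3, \dots, 2$ into a single chain, so the data splits into two \emph{staircase} sequences coupled by the twin-free condition and by the bridge inequality $h_1 \le h_{s-1}+2$. Passing from modulus $s$ to modulus $s-2$ should correspond to deleting the two chain-tops (residues $s-2$ and $s-1$), and the aim is to show that this correspondence is four-to-one.

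The main obstacle is precisely this last step. A naive deletion of the top residues does not land in the admissible configurations for modulus $s-2$, because the special boundary data — the cap $h_2 \le 1$ and the extra $+2$ in the bridge $h_1 \le h_{s-1}+2$ — are attached to the residues neighboring $0$ and get disturbed when the chain is truncated; concretely, the fibers of the naive restriction are not uniform. To get around this I would imitate the device of Lemma~\ref{lem:twinfree}: introduce an auxiliary count $\tilde A(s)$ in which one boundary constraint is relaxed so that it obeys a genuinely four-to-one recurrence, and then recover $A(s)$ from $\tilde A(s)$ and $\tilde A(s-2)$ by an inclusion–exclusion over the disturbed boundary, exactly as the paper recovers $N_d(s)$ from $M_d(s)$. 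Equivalently, one may run a transfer matrix along the residue chain and evaluate its weighted path count; in either formulation the boundary corrections at the wrap are the crux.
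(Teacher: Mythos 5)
First, a point of orientation: the paper does not prove this statement at all. It is Conjecture~\ref{conj:2}, which the author explicitly leaves open, supported only by exhaustive computation for $s<20$. So your argument must stand entirely on its own, and it does not: it is a correct reformulation followed by an unexecuted plan. The reformulation itself is sound. Encoding an $(s,s+2)$-core partition with distinct parts by the heights $h_r$ of the residue classes of $\beta(\lambda)$ modulo $s$ is a faithful extension of the method of Lemma~\ref{lem:twinfree:core}, and I can confirm your constraint list: never $h_r>0$ and $h_{r+1}>0$ simultaneously (twin-freeness), $h_r\le h_{r-2}+1$ for $3\le r\le s-1$ (closure under subtracting $s+2$ within a residue chain), $h_2\le 1$, and $h_1\le h_{s-1}+2$ (the two wrap conditions at residue $0$), as well as the values $A(1)=1$, $A(3)=4$, $A(5)=16$, which match the paper's lists of $(3,5)$- and $(5,7)$-core partitions.

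The gap is that the entire content of the conjecture lies in the step you do not carry out. Proving $A(s)=4A(s-2)$, or $A(s)=2^{s-1}$ by any other route, is the whole difficulty, and your treatment of it consists of intentions ("I would imitate the device of Lemma~\ref{lem:twinfree}", "introduce an auxiliary count $\tilde A(s)$", "run a transfer matrix") together with an explicit admission that "the boundary corrections at the wrap are the crux." Moreover, the obstruction is more severe than your description suggests. In the chain order induced by $r\mapsto r-2$, the twin-free constraints couple residues sitting at a fixed offset of $(s\pm 1)/2$ apart, so your two staircases interact along their entire length, not only through the bridge inequality; equivalently, in the natural order the interactions are local but the height alphabet grows like $s/2$, so any transfer matrix has a state space growing with $s$ and its uniform-in-$s$ evaluation is itself a nontrivial theorem. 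This is exactly why no analog of the clean $J$-splitting of Lemma~\ref{lem:twinfree} and Theorem~\ref{thm:fibx:distinct2} is available here: there the element $s-1$ could be removed from all sets at a uniform cost of a factor $d$, whereas your deletion of the top residues has fibers whose sizes depend on the configuration near the wrap. Until you define $\tilde A(s)$ precisely, prove its recurrence, and prove the inclusion–exclusion identity relating it to $A(s)$, what you have is a useful restatement of the conjecture, not a proof of it.
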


This claim is based on experimental evidence and has been verified for $s <
20$ after listing all relevant partitions.

\begin{example}
  For $s = 3$, the four $( 3, 5)$-core partitions into distinct parts are
  \begin{equation*}
    \{ \} , \quad \{ 1 \}, \quad \{ 2 \}, \quad
     \{ 3, 1 \} .
  \end{equation*}
  For $s = 5$, the sixteen $( 5, 7)$-core partitions into distinct parts are
  \begin{align*}
    & \{ \} , \quad \{ 1 \}, \quad \{ 2 \},
    \quad \{ 3 \}, \quad \{ 4 \} , \quad \{ 2, 1
    \}, \quad \{ 3, 1 \}, \quad \{ 5, 1 \}, \quad \{ 3, 2
    \}, \quad \{ 4, 2, 1 \},\\
    & \{ 6, 2, 1 \}, \quad \{ 4, 3, 1 \}, \quad \{ 7, 3, 2
    \}, \quad \{ 5, 4, 2, 1 \}, \quad \{ 8, 4, 3, 1 \},
    \quad \{ 9, 5, 4, 2, 1 \} .
  \end{align*}
  Note that the largest occurring size among these partitions is $3 + 1 = 4$,
  for $s = 3$, and $9 + 5 + 4 + 2 + 1 = 21$, for $s = 5$. For $s = 3, 5,
  \ldots, 17$, the largest possible sizes of $( s, s + 2)$-core partitions
  into distinct parts are
  \begin{equation*}
    4, 21, 65, 155, 315, 574, 966, 1530.
  \end{equation*}
  Based on the initial data, it appears that there is a unique partition of
  this largest size, and that the largest possible size of an $( s, s +
  2)$-core partition into distinct parts is $\frac{1}{384} ( s^2 - 1) ( s + 3)
  ( 5 s + 17)$. This partition of largest size appears to have both the
  highest number of parts (namely, $\frac{1}{8} ( s - 1) ( s + 5)$ many) and
  the largest part (namely, a part of size $\frac{3}{8} ( s^2 - 1)$). After
  $\{ 3, 1 \}$ and $\{ 9, 5, 4, 2, 1 \}$, the next such unique largest
  partitions are
  \begin{equation*}
    \{18, 12, 11, 7, 6, 5, 3, 2, 1\} , \quad \{30, 22, 21, 15,
     14, 13, 9, 8, 7, 6, 4, 3, 2, 1\}.
  \end{equation*}
\end{example}

We hope that Conjecture~\ref{conj:2} together with the results in this paper
provide clues for enumerating $( s, t)$-core partitions into distinct parts.
Table~\ref{tbl:stcore:distinct} lists the number of such partitions for $s, t
\leq 12$. Observe, in particular, the occurrence of the Fibonacci numbers
next to the main diagonal, in accordance with Theorem~\ref{thm:fib:distinct}.

\begin{table}[h]
  \begin{equation*}
    \begin{array}{|c||c|c|c|c|c|c|c|c|c|c|c|c|}
       \hline
       s\backslash t & 1 & 2 & 3 & 4 & 5 & 6 & 7 & 8 & 9 & 10 & 11 & 12\\
       \hhline{|=#=|=|=|=|=|=|=|=|=|=|=|=|}
       1 & 1 & 1 & 1 & 1 & 1 & 1 & 1 & 1 & 1 & 1 & 1 & 1\\
       \hline
       2 & 1 & \infty & 2 & \infty & 3 & \infty & 4 & \infty & 5 & \infty & 6
       & \infty\\
       \hline
       3 & 1 & 2 & \infty & 3 & 4 & \infty & 5 & 6 & \infty & 7 & 8 & \infty\\
       \hline
       4 & 1 & \infty & 3 & \infty & 5 & \infty & 8 & \infty & 11 & \infty &
       15 & \infty\\
       \hline
       5 & 1 & 3 & 4 & 5 & \infty & 8 & 16 & 18 & 16 & \infty & 21 & 38\\
       \hline
       6 & 1 & \infty & \infty & \infty & 8 & \infty & 13 & \infty & \infty &
       \infty & 32 & \infty\\
       \hline
       7 & 1 & 4 & 5 & 8 & 16 & 13 & \infty & 21 & 64 & 50 & 64 & 114\\
       \hline
       8 & 1 & \infty & 6 & \infty & 18 & \infty & 21 & \infty & 34 & \infty &
       101 & \infty\\
       \hline
       9 & 1 & 5 & \infty & 11 & 16 & \infty & 64 & 34 & \infty & 55 & 256 &
       \infty\\
       \hline
       10 & 1 & \infty & 7 & \infty & \infty & \infty & 50 & \infty & 55 &
       \infty & 89 & \infty\\
       \hline
       11 & 1 & 6 & 8 & 15 & 21 & 32 & 64 & 101 & 256 & 89 & \infty & 144\\
       \hline
       12 & 1 & \infty & \infty & \infty & 38 & \infty & 114 & \infty & \infty
       & \infty & 144 & \infty\\
       \hline
     \end{array}
  \end{equation*}
  \caption{\label{tbl:stcore:distinct}The number of $( s, t)$-core partitions
  into distinct parts for $s, t \leq 12$}
\end{table}

It would further be interesting, but appears to be harder, to enumerate $( s,
t)$-core partitions into odd parts.

\begin{acknowledgements}
I thank Tewodros Amdeberhan for introducing me to
$( s, t)$-core partitions and his conjecture, as well as for many interesting
discussions, comments and suggestions. I am also grateful to
George~E.~Andrews, Bruce~C.~Berndt, Robert Osburn and Wadim Zudilin for
comments on an earlier version of this paper.
\end{acknowledgements}

\end{document}